\newcommand{\cE}{\mathcal{E}}
\newcommand{\cM}{\mathcal{M}}
\newcommand{\cO}{\mathcal{O}}\newcommand{\cP}{\mathcal{P}}
\newcommand{\cU}{\mathcal{U}}
\newcommand{\cX}{\mathcal{X}}
\newcommand{\cY}{\mathcal{Y}}
\newcommand{\bI}{\mathbb{I}}
\newcommand{\bL}{\mathbb{L}}
\newcommand{\bN}{\mathbb{N}}
\newcommand{\bR}{\mathbb{R}}
\newtheorem{theorem}{Theorem}[section]
\newtheorem{lemma}[theorem]{Lemma}
\newtheorem{proposition}[theorem]{Proposition}
\newtheorem{corollary}[theorem]{Corollary}
\theoremstyle{definition}
\newtheorem{definition}[theorem]{Definition}
\newtheorem{assumption}[theorem]{Assumption}
\theoremstyle{remark}
\newtheorem{example}[theorem]{Example}
\newtheorem{remark}[theorem]{Remark}
\newcommand{\cat}[1]{\mathsf{#1}}
\newcommand{\mr}[1]{{\rm #1}}
\newcommand{\fS}{\mathfrak{S}}
\newcommand{\I}{\mathbb I}
\newcommand{\lra}{\longrightarrow}
\newcommand{\bfE}{\mathbf{E}}
\newcommand{\bfF}{\mathbf{F}}
\newcommand{\bfT}{\mathbf{T}}
\newcommand{\bfA}{\mathbf{A}}
\newcommand{\bfB}{\mathbf{B}}
\newcommand{\bunit}{\mathbbm{1}}
\title{The May-Milgram filtration and $\cE_k$-cells}
\author{Inbar Klang}
\email{klang@math.columbia.edu}
\address{Department of Mathematics \\
2990 Broadway\\
New York NY, 10027\\ USA}
\author{Alexander Kupers}
\email{kupers@math.harvard.edu}
\address{Department of Mathematics \\
	One Oxford Street \\
	Cambridge MA, 02138 \\USA}
\thanks{Alexander Kupers is supported by NSF grant DMS-1803766.}
\author{Jeremy Miller}
\email{jeremykmiller@purdue.edu}
\address{Department of Mathematics \\
	150 North University \\
	West Lafayette IN, 47907 \\USA}
\thanks{Jeremy Miller was supported by NSF grant DMS-1709726.}
\date{May 8, 2020}
	\def\MR#1{}
\begin{document}
	
\begin{abstract} 
	We describe an $\cE_k$-cell structure on the free $\cE_{k+1}$-algebra on a point, and more generally describe how the May-Milgram filtration of $\Omega^m \Sigma^m S^{k}$ lifts to a filtration of the free $\cE_{k+m}$-algebra on a point by iterated pushouts of free $\cE_k$-algebras.
\end{abstract}

\maketitle

\section{Introduction} Cell structures on topological spaces have many uses, and operadic cell structures play similar roles in the study of algebras over an operad; they appear when defining model category structures on categories of algebras over operads, and in the case of the little $k$-cubes operad $\cE_k$ they have found applications to homological stability \cite{KM4,GKRW1,GKRW2}. 

Despite the usefulness of such $\cE_k$-cell structures, few have been described explicitly. In this paper we give an $\cE_k$-cell decomposition of an $\cE_k$-algebra weakly equivalent to the free $\cE_{k+1}$-algebra on a point. We generalize this to a filtration of the free $\cE_{k+m}$-algebra on a point by $\cE_k$-algebras, and explain how this can be thought of as a lift of the May-Milgram filtration of the iterated based loop space $\Omega^m \Sigma^m S^{k}$.

We shall state our results without giving definitions (which appear in Section \ref{sec:homotopy-theory-recollection}), with the exception of that of a cell attachment in the category $\cat{Alg}_{\cE_k}(\cat{Top})$. This definition uses the free $\cE_k$-algebra functor $\bfE_k = \bfF^{\cE_k}$, which is left adjoint to the forgetful functor $\cU^{\cE_k} \colon \cat{Alg}_{\cE_k}(\cat{Top}) \to \cat{Top}$ sending an $\cE_k$-algebra to its underlying space. Let $\mathbf{A}$ be an $\cE_k$-algebra in $\cat{Top}$, and let $e \colon \partial D^{d} \to \cU^{\cE_k}(\mathbf{A})$ be a map of topological spaces. To attach a $d$-dimensional $\cE_k$-cell to $\bfA$, we take the adjoint map $\bfE_k(\partial D^d) \to \bfA$ of $e$ and take the pushout of the following diagram in $\cat{Alg}_{\cE_k}(\cat{Top})$:
\[\begin{tikzcd} \bfE_k(\partial D^d) \ar[r] \ar[d] & \bfA \dar  \\
	   \bfE_k(D^d) \rar & \bfA \cup^{\cE_k}_e D^d. \end{tikzcd}\] 
To make this homotopy invariant we need to require that $\bfA$ is cofibrant, or derive the cell-attachment construction.

An $\cE_k$-algebra is called \emph{cellular} if it is built by iterated $\cE_k$-cell attachments (such algebras are always cofibrant). Giving an $\cE_k$-cell structure on $\bfA$ is giving a weak equivalence between $\bfA$ and a cellular $\cE_k$-algebra. Note that the following colimit is also a homotopy colimit: 

\begin{theorem} \label{thm:maincell}
	$\bfE_{k+1}(\ast)$ is weakly equivalent as an $\cE_k$-algebra to a cellular $\cE_k$-algebra with exactly one cell in dimensions divisible by $k$ and no other cells. That is, it is weakly equivalent to the colimit $\mr{colim}_{r \in \bN}\, \bfA_r$ of algebras $\bfA_r$ obtained by setting $\bfA_{-1} = \varnothing$ and taking iterated pushouts in $\cat{Alg}_{\cE_k}(\cat{Top})$
	\[\begin{tikzcd} \bfE_k(\partial D^{rk}) \rar \dar & \bfA_{r} \dar \\
	\bfE_k(D^{rk}) \rar & \bfA_{r+1}. \end{tikzcd}\]
\end{theorem}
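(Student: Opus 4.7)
My plan is to lift the classical James filtration of $\Omega S^{k+1} \simeq JS^k$ (the $m=1$ case of the May--Milgram filtration of $\Omega^m \Sigma^m S^k$) to a cellular $\cE_k$-algebra filtration of $\bfE_{k+1}(\ast)$. The $0$-th stage is $\bfA_0 = \bfE_k(\ast) \simeq \varnothing \cup^{\cE_k} D^0$, equipped with the tautological map to $\bfE_{k+1}(\ast)$ induced by the unit $\ast \to \cU^{\cE_k}\bfE_{k+1}(\ast)$ via the free $\cE_k$-algebra adjunction. The guiding heuristic is Dunn--Lurie additivity, which identifies $\cE_{k+1}$-algebras with $\cE_1$-algebras in $\cE_k$-algebras and realizes $\bfE_{k+1}(\ast)$ as the free $\cE_1$-algebra in $\cat{Alg}_{\cE_k}(\cat{Top})$ on the $0$-cell $\bfE_k(\ast)$; this gives a natural word-length (= James) filtration whose stages are to be the targets of our cell attachments.

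Inductively, given $\bfA_{r-1}$ with a compatible map to $\bfE_{k+1}(\ast)$, I would construct the next attaching map $\partial D^{rk} \to \cU^{\cE_k}\bfA_{r-1}$ as the $(rk-1)$-dimensional class arising from iterated Browder brackets (and $\cE_k$-products) of the cell generators of $\bfA_{r-1}$; this class is canonically null-homotopic in $\bfE_{k+1}(\ast)$ via the extra $\cE_{k+1}$-operations. The dimension $rk$ is forced by matching the subquotient $J_r S^k / J_{r-1} S^k \cong S^{rk}$ of the classical James filtration of $\Omega S^{k+1}$. The pushout $\bfA_r = \bfA_{r-1} \cup^{\cE_k} D^{rk}$ then inherits a canonical extension of the map to $\bfE_{k+1}(\ast)$.

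To conclude, I would check that $\mr{colim}_r \bfA_r \to \bfE_{k+1}(\ast)$ is a weak equivalence by comparing associated gradeds: the cellular filtration of the source against the May--Milgram/James filtration of the target. The principal technical obstacle is showing that each step is genuinely an $\cE_k$-cell attachment in dimension $rk$ --- equivalently, that the cofibre of $\bfA_{r-1} \to \bfA_r$ in $\cat{Alg}_{\cE_k}(\cat{Top})$ recovers the $r$-th May--Milgram subquotient $S^{rk}$. This identification requires a careful analysis, likely via a bar-construction or Snaith-type splitting argument, of how $r$-fold coproducts of $\bfE_k(\ast)$ in $\cat{Alg}_{\cE_k}(\cat{Top})$ encode the iterated operations of the full $\cE_{k+1}$-structure, and of how the relative operadic homology $H^{\cE_k}_*(\bfE_{k+1}(\ast))$ is a shifted free module with one generator in each degree $rk$.
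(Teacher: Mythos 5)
Your proposal has the right global picture — matching a cellular $\cE_k$-filtration of $\bfE_{k+1}(\ast)$ against the James filtration with associated graded $S^{rk}$ — but there is a genuine gap exactly where you flag ``the principal technical obstacle,'' and it is not a small one: it is where essentially all the content of the proof lives. Your proposed construction of the attaching map $\partial D^{rk} \to \cU^{\cE_k}\bfA_{r-1}$ as ``the $(rk-1)$-dimensional class arising from iterated Browder brackets and $\cE_k$-products'' does not actually pin down a map of spaces; it gestures at a homology class, and even at that level it is unclear which composite of operations is meant once $r \geq 2$. More importantly, after the first attachment $\bfA_{r-1}$ is no longer a free $\cE_k$-algebra, so ``cell generators'' do not canonically provide a sphere mapping into its underlying space, and null-homotopy of a putative class in $\bfE_{k+1}(\ast)$ does not by itself produce the required compatible extension of $\bfA_{r-1} \to \bfE_{k+1}(\ast)$ to $\bfA_r$, nor guarantee the resulting map is an equivalence on associated gradeds. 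You would need to supply, at minimum, an explicit model in which both the attaching map and the comparison map exist, plus a mechanism to verify the subquotient identification — and that is precisely what the Browder-bracket heuristic omits.

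The paper proceeds differently, and in particular does \emph{not} go through Dunn--Lurie additivity (they explicitly remark it would be interesting to know whether one could). Instead, the filtration is defined intrinsically: working in $\cat{Top}^{\bN}$ with the cardinality grading, they introduce the rank completion functors $\bfT_r^{\bL}$, and Proposition \ref{prop:attach-one-step} gives, purely formally for non-unitary $\cO$ with $\cO(1) \simeq \ast$, a homotopy pushout exhibiting $\bfT_{r+1}^{\bL}(\bfA)$ as $\bfT_{r}^{\bL}(\bfA)$ with a free $\cO$-algebra glued on. The hard analytic input is then Proposition \ref{prop:alpha-r}: the bar-construction model $\bfT_r^{\bL}(\cU\bfE_{k+m}(\ast))$ is identified with the explicit configuration-space model $\bfF^{[r]}(\bI^k\times\bI^m)(\ast)$ (configurations in $\bI^k\times\bI^m$ with at most $r$ points over each $x\in\bI^k$) by a microfibration argument with contractible fibers. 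From there the attaching data is read off geometrically: the sphere/disk bundle of the normal bundle of $\bI^k\times F_{r+1}(\I^m)$ inside $F_{r+1}(\bI^k\times\bI^m)$ (Lemma \ref{lem:thomspacelevel2}), which for $m=1$ is a trivial $\bR^{rk}$-bundle over a contractible base, giving exactly $\partial D^{rk}\hookrightarrow D^{rk}$ and hence Theorem \ref{thm:maincell} as the $m=1$ case of Theorem \ref{thm:mainhighercell}. So the attaching maps are never constructed ``by hand'' from operad operations and then checked against the target; the filtration and its cell structure are produced at the same time from the cardinality stratification of configuration spaces. If you want to rescue your approach, you would effectively need to rediscover this machinery to make ``compare associated gradeds'' a theorem rather than a wish.
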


An $\cE_k$-cell structure on $\bfA$ induces an ordinary cell structure on its $k$-fold delooping $B^k \bfA$ (see Remark \ref{deloopingCells}). The one induced on $B^k \bfE_{k+1}(\ast) \cong \Omega \Sigma S^k$ by the $\cE_k$-cell structure of Theorem \ref{thm:maincell} is that coming from the James construction \cite{james}. The filtration coming from the James construction was generalized by May and Milgram \cite{M,Mil2} to a filtration on $\Omega^m \Sigma^m S^{k}$, and we will construct a filtration of $\bfE_{k+m}(\ast)$ by $\cE_k$-algebras which deloops to the May-Milgram filtration on $B^k \bfE_{k+m}(\ast) \simeq \Omega^m \Sigma^m S^{k}$.

To state this result precisely, we need to introduce some notation. Let $\bI$ denote the open interval $(0,1)$, $F_r(\bI^m)$ the space of \emph{ordered configurations} of $r$ points in $\bI^m$, and $C_r(\bI^m) \coloneqq F_r(\bI^m)/\fS_r$ the space of \emph{unordered configurations} of $r$ points in $\bI^m$. Furthermore, let $\phi_{m,r}$ denote the vector bundle $F_r(\I^{m}) \times_{\fS_r} \bR^{r-1} \to C_r(\I^m)$ with $\bR^{r-1}$ the representation of the symmetric group $\fS_r$ given by the orthogonal complement to the trivial representation in the permutation representation with its usual metric. This vector bundle inherits a Riemannian metric. For $E \to B$ a vector bundle with Riemannian metric, let $D(E)$ denote its unit disk bundle, $S(E)$ denote its unit sphere bundle and $kE$ denote its $k$-fold Whitney sum.

\begin{theorem} \label{thm:mainhighercell}
	$\bfE_{k+m}(\ast)$ is weakly equivalent as an $\cE_k$-algebra to the colimit $\mr{colim}_{r \in \bN} \, \bfA_r$ of algebras $\bfA_r$ obtained by setting $\bfA_{-1} = \varnothing$ and taking iterated pushouts in $\cat{Alg}_{\cE_k}(\cat{Top})$
	\[\begin{tikzcd} \bfE_k(S(k\phi_{m,r+1})) \rar \dar & \bfA_{r} \dar \\
	\bfE_k(D(k\phi_{m,r+1})) \rar & \bfA_{r+1}. \end{tikzcd}\]
\end{theorem}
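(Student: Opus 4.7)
The plan is to construct the filtration explicitly and verify convergence via $k$-fold delooping. The base case $m = 1$ follows from Theorem~\ref{thm:maincell}: since $C_{r+1}(\I)$ is a contractible open simplex and $\phi_{1, r+1}$ has fiber the $r$-dimensional reduced permutation representation of $\fS_{r+1}$, we have $D(k \phi_{1, r+1}) \simeq D^{kr}$ and $S(k \phi_{1, r+1}) \simeq S^{kr-1}$, which recovers the cellular data of Theorem~\ref{thm:maincell}.

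For general $m$, I would first construct the attaching maps. A point of $S(k \phi_{m, r+1})$ is represented by $r+1$ distinct points $c_1, \ldots, c_{r+1} \in \I^m$ together with vectors $v_1, \ldots, v_{r+1} \in \bR^k$ satisfying $\sum v_i = 0$ and $\sum |v_i|^2 = 1$. Send this to the configuration in $\I^{k+m}$ whose $i$-th point is $(x_0 + \delta v_i, c_i)$, for a fixed interior point $x_0 \in \I^k$ and a small parameter $\delta > 0$; extending to $D(k \phi_{m, r+1})$ by allowing $\sum |v_i|^2 \leq 1$ gives a canonical null-homotopy in $\bfE_{k+m}(\ast)$. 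One must then establish that the map on the sphere factors through $\bfA_r$, using a combinatorial homotopy that clusters the $v_i$'s into two groups along the direction of largest magnitude and applies $\cE_k$-operations to express the configuration as combinations of lower-arity cells already built into $\bfA_r$.

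Convergence of $\mr{colim}_r \bfA_r \to \bfE_{k+m}(\ast)$ would then be established via $k$-fold delooping. The cell subquotient $\bfE_k(D(k\phi_{m, r+1}))/\bfE_k(S(k\phi_{m, r+1}))$, after applying $B^k$, becomes $S^k \wedge \mr{Thom}(k \phi_{m, r+1})$, matching the $(r+1)$-st subquotient of the May--Milgram filtration of $C(\I^m, S^k) \simeq \Omega^m \Sigma^m S^k \simeq B^k \bfE_{k+m}(\ast)$. Comparing the two filtrations using connectivity and the recognition principle then yields the desired weak equivalence of $\cE_k$-algebras. The main obstacle is the attaching-map step: showing that the geometrically natural map into $\bfE_{k+m}(\ast)$ factors through the previous stage $\bfA_r$ rather than the whole target, which requires a careful analysis of how $\cE_{k+m}$-operations parametrized by configurations in $\I^m$ decompose under the $\cE_k$-structure acting on the first $k$ coordinates. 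A possible simplification is to work inductively on $m$ via Dunn additivity, $\cE_{k+m} \simeq \cE_{k+m-1} \otimes \cE_1$, reducing the attaching-map problem to the analogous statement one dimension lower, but even there the identification of the resulting bundle with $k \phi_{m, r+1}$ requires an equivariant decomposition of the reduced permutation representation compatible with the product $\I^m = \I \times \I^{m-1}$.
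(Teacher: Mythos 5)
Your strategy is fundamentally different from the paper's, and I see two serious gaps in it. The paper proves this by first developing a general ``rank completion filtration'' $\bfT^\bL_r$ on $\cO$-algebras in a graded category, showing via Proposition~\ref{prop:attach-one-step} that consecutive stages differ by a pushout along a map of free $\cO$-algebras, and then identifying $\bfT^\bL_r(\cU\bfE_{k+m}(\ast))$ with the configuration space model $\bfF^{[r]}(\I^k\times\I^m)(\ast)$ via a microfibration argument. The identification of the cell boundary with $S(k\phi_{m,r+1})$ then falls out of the tubular neighbourhood theorem applied to $\I^k\times F_{r+1}(\I^m)\subset F_{r+1}(\I^k\times\I^m)$, and a final step straightens the resulting zigzag of homotopy pushouts into honest pushouts. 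Crucially, all weak equivalences are verified at the level of underlying spaces (where the forgetful functor from $\cE_k$-algebras is conservative), and delooping is only brought in \emph{afterward}, in Section~\ref{sec:may-milgram}, to relate this to the May--Milgram filtration.

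The first gap in your proposal is the convergence argument ``via $k$-fold delooping.'' Since $\pi_0\,\bfE_{k+m}(\ast)\cong\bN$ is not a group, neither $\bfE_{k+m}(\ast)$ nor any $\bfA_r$ is grouplike, and $B^k$ is \emph{not} conservative on non-grouplike $\cE_k$-algebras: already for $k=1$ the inclusion of monoids $\bN\hookrightarrow\bZ$ induces an equivalence $B\bN\simeq S^1\simeq B\bZ$ without being an equivalence. So exhibiting a weak equivalence $B^k(\mr{colim}_r\bfA_r)\simeq B^k\bfE_{k+m}(\ast)$ by comparing with the May--Milgram filtration cannot, on its own, yield the desired weak equivalence of $\cE_k$-algebras; invoking ``connectivity and the recognition principle'' does not repair this without further hypotheses you have not supplied.

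The second gap is the one you name yourself: the factorization of the attaching map through $\bfA_r$. Your explicit map $S(k\phi_{m,r+1})\to\bfE_{k+m}(\ast)$ sending $(\{c_i\},\{v_i\})$ to $\{(x_0+\delta v_i,\,c_i)\}$ is the right geometric picture (note that since $\sum v_i=0$ and $\sum|v_i|^2=1$, not all $v_i$ coincide, so the configuration genuinely has at most $r$ points over any $x\in\I^k$), but showing that this lands in $\bfA_r$ is precisely where all the work is, and the ``cluster along the direction of largest magnitude'' sketch does not obviously produce a continuous, $\fS_{r+1}$-equivariant, $\cE_k$-compatible decomposition. The paper sidesteps this entirely: in the model $\bfF^{[r]}(\I^k\times\I^m)(\ast)$, whose underlying space consists of configurations with at most $r$ points in each fiber $\{x\}\times\I^m$, the sphere maps in tautologically; the hard content is instead the microfibration argument (Lemmas~\ref{ismicro} and~\ref{iscontractible}) identifying $\bfT^\bL_r$ with this configuration space. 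To execute your plan you would essentially have to reconstruct such a model anyway, and then verify convergence at the space level rather than after delooping. Finally, your remark about using Dunn additivity is exactly the open question the authors pose in the remark following the theorem statement, so that route is not currently available either.
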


This implies that the homotopy cofiber of $\bfA_r \to \bfA_{r+1}$ in $\cat{Alg}_{\cE_k}(\cat{Top})$ is the free $\cE_k$-algebra on the Thom space of $k\phi_{m,r+1}$ viewed as a based space. These Thom spaces and their corresponding Thom spectra are well-studied, e.g.~being related to Brown-Gitler spectra when $k=2$ \cite{cohenmahowaldmilgram,cohencohenkuhnneisendorfer}. When $m=1$, $C_r(\I^m)$ is contractible and the vector bundle $k\phi_{m,r+1}$ has dimension $kr$. Thus the sphere bundle is homotopy equivalent to $\partial D^{rk}$ and hence Theorem \ref{thm:maincell} is a consequence of Theorem \ref{thm:mainhighercell}. 

In Corollary \ref{cor:a-desc} we give a configuration space model $\bfF^{[r]}(\bI^k \times \bI^m)(\ast)$ for the $\cE_k$-algebras $\bfA_r$. The space $\bfF^{[r]}(\bI^k \times \bI^m)(\ast)$ is given by the spaces of configuration spaces of points in $\bI^m \times \bI^k$ such that each subset $\{x\} \times \bI^k$ contains at most $r$ points. We use this in Theorem \ref{mainfilt} to prove that the $k$-fold delooping of $\bfA_r$ is homotopy equivalent to the $r$th stage of the May-Milgram filtration of  $\Omega^m \Sigma^m S^{k}$.

\begin{remark}Our results bear a resemblance to the Dunn--Lurie additivity theorem \cite{Dunn} \cite[Theorem 5.1.2.2]{lurieha}. This result says that $\cE_{k+m} \simeq \cE_k \otimes \cE_m$ for a suitable tensor product of operads, and our result says that $\bfE_{k+m}(\ast)$ can be obtained as $\cE_k$-algebra from the cardinality filtration on $\bfE_m(\ast)$, twisted by the vector bundles $k\phi_{m,r+1}$. It would be interesting to know whether it is possible to deduce Theorem \ref{thm:mainhighercell} from the additivity theorem.
\end{remark}

\subsection*{Acknowledgments} The authors would like to thank Ralph Cohen, S{\o}ren Galatius, David Gepner, Mike Mandell, and Oscar Randal-Williams for helpful conversations. We would also like to thank the anonymous referee for helpful comments and suggestions.
	
\tableofcontents

\section{Recollections of homotopy theory for algebras over an operad}\label{sec:homotopy-theory-recollection}

We work in the setting of \cite{GKRW1} and use similar notation when possible. 

\begin{assumption} $\cat{S}$ is a simplicially enriched complete and cocomplete category with closed symmetric monoidal structure such that the tensor product $\otimes$ commutes with sifted colimits.\end{assumption}

\begin{assumption}$\cat{S}$ comes equipped with a cofibrantly generated model structure which is both simplicial and monoidal, and such that the monoidal unit $\bunit$ is cofibrant. \end{assumption} 

The first version of \cite{GKRW1} required that homotopy equivalences are weak equivalences but this is in fact always the case by Proposition 9.5.16 of \cite{hirschhorn}.

When $\cat{G}$ is a symmetric monoidal category, then we may endow the category $\cat{S}^\cat{G}$ of functors $\cat{G} \to \cat{S}$ with the Day convolution tensor product; this will also be symmetric monoidal. Similarly, the category $\cat{S}_\ast$ of pointed objects in $\cat{S}$ with smash product inherits these properties.

\begin{example}\label{exam:cats} The examples of $\cat{S}$ most relevant to this paper are: (i) the category  $\cat{sSet}$ of simplicial sets with the Quillen model structure and cartesian product, and (ii) the category $\cat{Top}$ of compactly generated weakly Hausdorff spaces with the Quillen model structure and cartesian product (see \cite{stricklandcgwh} for more details about the point-set topology).
\end{example}

Let $\cat{FB}_\infty$ denote the category of (possibly empty) finite sets and bijections, then the objects of the category  $(\cat{S}^\cat{G})^{\cat{FB}_\infty}$ of functors $\cat{FB}_\infty \to \cat{S}^\cat{G}$ are called \emph{symmetric sequences}. In addition to the Day convolution tensor product, $(\cat{S}^\cat{G})^{\cat{FB}_\infty}$ admits a composition product $\circ$ (which is rarely symmetric); for $\cX,\cY \in (\cat{S}^\cat{G})^{\cat{FB}_\infty}$ the evaluation of the composition product $\cX \circ \cY$ on the set $\{1,2,\ldots,r\}$ is given by
\[\cX \circ \cY(r) = \bigsqcup_{d \geq 0} \cX(d) \otimes_{\fS_d} \left(\bigsqcup_{k_1+\ldots+k_d = r}  \fS_r  \times_{\fS_{k_1}+\ldots+\fS_{k_d}}\cY(k_1) \otimes \ldots \otimes \cY(k_d)\right).\]

A (symmetric) \emph{operad} is a unital monoid with respect to this composition product. An $\cO$-algebra $\bfA$ is an object $A \in \cat{S}$ with a left $\cO$-module structure on $A$ considered as a symmetric sequence concentrated in cardinality $0$. Equivalently we can use the associated monad  on $\cat{S}$, for which we also use the notation $\cO$,
\[\cO(X) \coloneqq \bigsqcup_{r \geq 0} \cO(r) \otimes_{\fS_{r}} X^{\otimes r},\]
and define an $\cO$-algebra to be an algebra over this monad. The category $\cat{Alg}_\cO(\cat{S}^\cat{G})$ of $\cO$-algebras is both complete and cocomplete.

A \emph{free} $\cO$-algebra is one of the form $\cO(X)$ with $\cO$-algebra structure maps induced by the monad multiplication and unit. We use the notation $\bfF^\cO \colon \cat{S}^\cat{G} \to \cat{Alg}_\cO(\cat{S}^\cat{G})$ for the free $\cO$-algebra functor, which is the left adjoint to the forgetful functor $\cU^\cO \colon \cat{Alg}_\cO(\cat{S}^\cat{G}) \to \cat{S}^\cat{G}$ sending an algebra to its the underlying object. Note $\cO = \cU^\cO \bfF^\cO$.

Any $\cO$-algebra admits a canonical presentation as a reflexive coequalizer of free $\cO$-algebras:
\[\begin{tikzcd} \bfF^\cO(\cO(\cU^\cO(\bfA))) \arrow[shift left=.5ex]{r} \arrow[shift left=-.5ex]{r} & \bfF^\cO(\cU^\cO(\bfA)) \rar & \bfA,\end{tikzcd}\]
the top map coming from the $\cO$-algebra structure map $\cO(\cU^\cO(\bfA)) \to \cU^\cO(\bfA)$, and the bottom map coming from the natural transformation $\bfF^\cO \cO \to \bfF^\cO$ induced by the monad multiplication. Thus free $\cO$-algebras generate the category of $\cO$-algebras under sifted colimits, and the category of right $\cO$-module functors $\cat{C} \to \cat{D}$ preserving sifted colimits is equivalent to the category of functors $\cat{Alg}_\cO(\cat{C}) \to \cat{D}$ preserving sifted colimits; one constructs the latter from the former using the canonical presentation, and one constructs the former from the latter by evaluating on free $\cO$-algebras.

For $X \in \cat{S}^\cat{G}$ and $g \in \cat{G}$, the evaluation $X \mapsto X(g) \in \cat{S}$ has a left adjoint; given $Y \in \cat{S}$ we denote its image under this left adjoint by $Y^g$. Given a map $\partial D^d \to \cU^\cO(\bfA)(g)$ (where $\partial D^d$ stands for $\partial D^d \otimes \bunit$, the copowering of $\partial D^d$ with the monoidal unit), we obtain by adjunction first a map $\partial D^{g,d} \to \cU^\cO(\bfA)$ and then a map $\bfF^\cO(\partial D^{g,d}) \to \bfA$. An \emph{$\cO$-cell attachment} is defined to be the following pushout in $\cat{Alg}_\cO(\cat{S}^\cat{G})$
\begin{equation} \label{eqn:cell-attachment} \begin{tikzcd}\bfF^\cO(\partial D^{g,d}) \dar \rar & \bfA \dar \\
\bfF^\cO(D^{g,d}) \rar & \bfA \cup^\cO D^{g,d}.\end{tikzcd}\end{equation}
Explicitly this pushout may be constructed as the following reflexive coequalizer
\[\begin{tikzcd} \bfF^\cO(\cO(\cU^\cO(\bfA)) \cup D^{g,d}) \arrow[shift left=.5ex]{r} \arrow[shift left=-.5ex]{r} & \bfF^\cO(\cU^\cO(\bfA) \cup D^{g,d}) \rar & \bfA \cup^\cO D^{g,d}.\end{tikzcd}\]
The left vertical map in (\ref{eqn:cell-attachment}) is a cofibration, so cell attachments are homotopy-invariant when $\cat{Alg}_\cO(\cat{S}^\cat{G})$ is left proper. In general we need to derive the construction; we will momentarily explain when this can be done using a monadic bar resolution.

Using the copowering of $\cat{S}^\cat{G}$ over $\cat{sSet}$, any operad in simplicial sets gives rise to an operad in $\cat{S}^\cat{G}$, and using the strong monoidal functor $\mr{Sing} \colon \cat{Top} \to \cat{sSet}$ so does any operad in compactly-generated weakly Hausdorff topological spaces. We shall restrict our attention to operads $\cO$ in $\cat{sSet}$ which are $\Sigma$-cofibrant, i.e.\ for all $r \geq 0$ the $\fS_r$-action on $\cO(r)$ is free. We may attempt to define a model structure on $\cat{Alg}_\cO(\cat{S}^\cat{G})$ by declaring the (trivial) fibrations and weak equivalences to be those of underlying objects. If it exists, this is called the projective model structure.

\begin{assumption}The projective model structure exists on $\cat{Alg}_\cO(\cat{S}^\cat{G})$.\end{assumption}

When this assumption is satisfied, the projective model structure will be a cofibrantly generated model structure with generating (trivial) cofibrations obtained by applying $\cO$ to the generating (trivial) cofibrations of the model structure on $\cat{S}^\cat{G}$. Since $\cat{S}^\cat{G}$ is a simplicial and monoidal model category, it is automatic that the forgetful functor $U^\cO\colon \cat{Alg}_\cO(\cat{S}^\cat{G}) \to \cat{S}^\cat{G}$ preserves (trivial) cofibrations, cf.\ Lemma 9.5 of \cite{GKRW1}. When $\cO$ is a $\Sigma$-cofibrant operad in simplicial sets, the projective model structure exists in the settings of Example \ref{exam:cats}, cf.\ Section 9.2 of \cite{GKRW1}.

When $\cU^\cO(\bfA) \in \cat{S}^\cat{G}$ is cofibrant, we may use the monadic bar resolution to find an explicit cofibrant replacement of $\bfA$ and thus compute derived functors.
 
\begin{definition}The \emph{monadic bar resolution} is the augmented simplicial object $B_\bullet(\bfF^\cO,\cO,\bfA)$ with $p$-simplices given by $\bfF^\cO(\cO^p(\cU^\cO(\bfA)))$ for $p \geq 0$ and $\bfA$ for $p=-1$. The face maps and augmentation are induced by the monad multiplication and the $\cO$-algebra structure on $\bfA$, and the degeneracies by the unit of the monad.\end{definition}

This is a special case of the \emph{two-sided monadic bar construction}, which is used throughout the paper. It takes as input a monad $T$, a right $T$-functor $F$ and a $T$-algebra $\bfA$ with underlying object $A$, and has $p$-simplices given by $B_\bullet(F,T,\bfA) = F(T^p(A))$. The face maps and degeneracy maps are similar to above, for details see e.g.\ Section 9 of \cite{M}.

Let $|-|$ denote the (thin) geometric realization, and introduce the notation $B(\bfF^\cO,\cO,\bfA) \coloneqq |B_\bullet(\bfF^\cO,\cO,\bfA)|$. Note that here we take geometric realization in the categories of $\cO$-algebras, but $\cU^\cO$ commutes with geometric realization by Section 8.3.3 of \cite{GKRW1}. The augmentation induces a map $B(\bfF^\cO,\cO,\bfA) \to \bfA$, which is always a weak equivalence using an extra degeneracy argument. It is a free simplicial resolution in the sense of Definition 8.16 of \cite{GKRW1} when the bar construction is Reedy cofibrant. Because $\cO$ is $\Sigma$-cofibrant, this is the case when $\cU^\cO(\bfA)$ is cofibrant, using the Reedy cofibrancy criterion of Lemma 9.14 of \cite{GKRW1}.

\begin{remark}\label{deloopingCells} In \cite{KM4}, $\cO$-algebra cell attachments were defined using partial algebras. The formula in Definition 3.1 of \cite{KM4}, written in our notation, is $|[p] \mapsto \bfF^\cO(\cO^p(\bfA) \cup D^d)|$. This may be obtained by inserting $B(\bfF^\cO,\cO,\bfA)$ into the underived formula for cell attachment. We explained above that this gives derived cell attachment when $\cU^\cO(\bfA)$ is cofibrant, but in $\cat{Top}$ and $\cat{sSet}$ this assumption is unnecessary. In $\cat{sSet}$, every object is cofibrant. In $\cat{Top}$, we use that geometric realization sends levelwise weak equivalences between proper simplicial spaces to weak equivalences, even if the simplicial spaces are not levelwise cofibrant. This allows us to cofibrantly replace $\bfA$ in the category of $\cO$-algebras (which will be cofibrant in topological spaces because $\cO$ is $\Sigma$-cofibrant). Thus results of \cite{KM4} apply: in particular, Proposition 6.12 of \cite{KM4} implies that an $\cE_k$-cell structure on an $\cE_k$-algebra in topological spaces deloops to an ordinary cell structure on the $k$-fold delooping. The reason for this is that delooping preserves homotopy pushouts and $\bfE_k (\partial D^d) \to \bfE_k( D^d) $ deloops to $\partial D^d \hookrightarrow  D^d$.

\end{remark}

\section{Rank completion} We shall define a rank completion filtration in the case that we are working in a category of functors $\cat{S}^\cat{G}$ where $\cat{G}$ has a notion of rank, and the operad $\cO$ and $\cO$-algebra $\bfA$ satisfy mild conditions. Later in this paper, $\cO$ will be $\cE_k$ and $\cat{G}$ will be $\bN$; the rank function will be used to keep track of the number of points in a configuration, and though we shall not use this, $\cat{G}$ can be used to record group actions on configurations. 

\begin{assumption}$\cat{G}$ is a symmetric monoidal groupoid equipped with strong monoidal functor $\kappa \colon \cat{G} \to \bN$, which we call a \emph{rank functor}.
\end{assumption} 

Let $\cat{G}_{\leq r}$ denote the full subcategory on $\cat{G}$ on those objects $g$ such that $\kappa(g) \leq r$, and $\cat{G}_r$ denote the full subcategory on objects $g$ such that $\kappa(g) = r$. Precomposition gives restriction functors $(\leq r)^*$ and $(r)^*$ participating in adjunctions
\[\begin{tikzcd} \cat{S}^{\cat{G}_{\leq r}} \arrow[shift left=.5ex]{r}{(\leq r)_*} & \cat{S}^{\cat{G}} \arrow[shift left=.5ex]{l}{(\leq r)^*}, \end{tikzcd} \qquad \begin{tikzcd} \cat{S}^{\cat{G}_{r}} \arrow[shift left=.5ex]{r}{(r)_*} & \cat{S}^{\cat{G}} \arrow[shift left=.5ex]{l}{(r)^*}. \end{tikzcd}\]
There are further relative restriction and extension functors between $\cat{S}^{\cat{G}_r}$, $\cat{S}^{\cat{G}_{\leq r}}$ for different $r$, participating in analogous adjunctions. The functors $(\leq r)^*$ and $(r)^*$ are themselves left adjoints; though we will not use their right adjoints, we will use that $(\leq r)^*$ and $(r)^*$ commute with colimits.

It follows from the formula for Day convolution that $\cat{S}^{\cat{G}_{\leq r}}$ inherits a symmetric monoidal tensor product, an alternative expression for which is given by $X \otimes Y = (\leq r)^* ((\leq r)_*(X) \otimes (\leq r)_*(Y))$. This makes visible that $(\leq r)^*$ is strong monoidal and simplicial. In particular, the functor $(\leq r)^*$ takes $\cO$-algebras in $\cat{S}^\cat{G}$ to $\cO$-algebras in $\cat{S}^{\cat{G}_{\leq r}}$. Its left adjoint $(\leq r)_*$ in general does not. However, we may use the canonical presentation of $\cO$-algebras explained in the previous section to construct a left adjoint $(\leq r)_*^\mr{alg} \colon \cat{Alg}_\cO(\cat{S}^{\cat{G}_{\leq r}}) \to \cat{Alg}_\cO(\cat{S}^\cat{G})$ to $(\leq r)^* \colon \cat{Alg}_\cO(\cat{S}^\cat{G}) \to \cat{Alg}_\cO(\cat{S}^{\cat{G}_{\leq r}})$. Explicitly it is the following reflexive coequalizer
\[\begin{tikzcd} \bfF^\cO((\leq r)_*\cO(\cU^\cO(\bfA))) \arrow[shift left=.5ex]{r} \arrow[shift left=-.5ex]{r} & \bfF^\cO((\leq r)_* \cU^\cO(\bfA)) \rar & (\leq r)_*^\mr{alg}(\bfA).\end{tikzcd}\]
It is defined uniquely up to isomorphism by demanding that $(\leq r)_*^\mr{alg}  F^\cO(X) = F^\cO((\leq r)_*(X))$ and that it preserves sifted colimits.

\begin{definition}We define the \emph{$r$th rank completion functor} $\bfT_r \colon \cat{Alg}_\cO(\cat{S}^\cat{G}) \to \cat{Alg}_\cO(\cat{S}^\cat{G})$ to be $(\leq r)_*^\mr{alg} (\leq r)^*$.\end{definition}

This functor underlies the monad associated to the adjunction $(\leq r)_*^\mr{alg} \dashv (\leq r)^*$ and has a right adjoint. The counit gives a natural transformation $\bfT_r \Rightarrow \mr{id}$, and the commutative diagram of groupoids
\[\begin{tikzcd} \cat{G} & & & & \\
\cat{G}_{\leq 0} \rar \uar & \cat{G}_{\leq 1} \arrow{lu} \rar & \cat{G}_{\leq 2} \rar \arrow{llu} & \cdots, \end{tikzcd}\]
gives rise to a tower of natural transformations of functors $\cat{Alg}_\cO(\cat{S}^\cat{G}) \to \cat{Alg}_\cO(\cat{S}^\cat{G})$
\[\begin{tikzcd} \mr{id} & & & & \\
\bfT_0 \rar \uar & \bfT_1 \rar  \arrow{lu} & \bfT_2 \rar \arrow{llu} & \cdots. \end{tikzcd}\]
Since colimits are computed objectwise and the map $(g)^* \bfT_r(\bfA) \to (g)^* \bfA$ is the identity as soon as $r \geq \kappa(g)$, the natural transformation $\mr{colim}_{r \in \bN} \bfT_r \Rightarrow \mr{id}$ is a natural isomorphism.

The functor $(\leq r)^*$ obviously preserves fibrations and weak equivalences, so $(\leq r)_*^\mr{alg}$ is a left Quillen functor. However, $(\leq r)^*$ also preserves cofibrations, as these are retracts of iterated pushouts along free $\cO$-algebra maps, which are preserved by $(\leq r)^*$. Hence $\bfT_r = (\leq r)_*^\mr{alg} (\leq r)^*$ preserves trivial cofibrations between cofibrant objects, and thus admits a left derived functor by precomposition with a functorial cofibrant replacement. Moreover, as explained above, when $\cU^\cO(\bfA)$ is cofibrant we may use a monadic bar resolution to cofibrantly replace $\bfA$. As a composition of two left adjoints, $\bfT_r$ commutes with geometric realization. Thus we obtain the following formula for $\bfT_r^\bL(\bfA)$:
\begin{equation}\label{eqn:tr-explicit} \bfT_r^\bL(\bfA) = B(\bfF^\cO (\leq r)_*,\cO,(\leq r)^* \cU^\cO(\bfA)) =  B(\bfF^\cO (\leq r)_* (\leq r)^*,\cO, \cU^\cO(\bfA)),\end{equation}
the latter equality following from the fact that $(\leq r)^*$ commutes with $\cO$.

We next restrict our attention to a setting where the underlying object of $\bfF^\cO(X)$ agrees with $X$ in rank $\leq r$ up to homotopy, for those $X$ which are concentrated in rank $r$. To see when this occurs, note that for any operad $\cO$ and $X$ concentrated in rank $r$, $\cO(X)$ is isomorphic to $\cO(0)$ in rank $0$ and $\cO(1) \otimes X$ in rank $r$. Hence the following assumption suffices:

\begin{assumption}\label{ass:unitary} $\cO$ is a \emph{non-unitary} operad in simplicial sets, i.e.\ $\cO(0) = \varnothing$, and $\cO(1) \simeq \ast$.
\end{assumption}

\begin{definition}We say $X \in \cat{S}^\cat{G}$ is \emph{reduced} if it is concentrated in rank $>0$, that is, $X(g)$ is initial when $\kappa(g) = 0$.\end{definition}

The horizontal maps in the following proposition are obtained from the identity morphisms of $(r+1)^* \cU^\cO \bfT_r^\bL(\bfA)$ and $(r+1)^* \cU^\cO \bfT_{r+1}^\bL(\bfA)$ respectively, the vertical maps from the natural transformation $\bfT_r \Rightarrow \bfT_{r+1}$.

\begin{proposition}\label{prop:attach-one-step} For reduced $\bfA$ there is a homotopy cocartesian square in $\cat{Alg}_\cO(\cat{S}^{\cat{G}})$
	\[\begin{tikzcd}\bfF^{\cO}((r+1)_* (r+1)^* \cU^\cO \bfT^\bL_r(\bfA)) \dar \rar &   \bfT^\bL_r(\bfA) \dar \\
	\bfF^{\cO}((r+1)_* (r+1)^* \cU^\cO \bfT^\bL_{r+1}(\bfA))) \rar & \bfT^\bL_{r+1}(\bfA), \end{tikzcd} \]
where we remark that $(r+1)^* \cU^\cO \bfT^\bL_{r+1}(\bfA) \cong (r+1)^* \cU^\cO(\bfA)$.
\end{proposition}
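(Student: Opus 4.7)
The plan is to realize both sides of the square as geometric realizations of a simplicial square of $\cO$-algebras that is a pushout at each simplicial level. First I would use formula \eqref{eqn:tr-explicit} to write $\bfT_r^\bL(\bfA) = |B^r_\bullet|$ and $\bfT_{r+1}^\bL(\bfA) = |B^{r+1}_\bullet|$, where $B^s_p \coloneqq \bfF^\cO((\leq s)_*(\leq s)^* \cO^p \cU^\cO \bfA)$, after replacing $\bfA$ by a cofibrant algebra so that these bar constructions are Reedy cofibrant (Lemma 9.14 of \cite{GKRW1}).

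Two elementary observations will drive the argument. First, for any $X \in \cat{S}^\cat{G}$ there is a canonical coproduct decomposition $(\leq r+1)_*(\leq r+1)^* X \cong (\leq r)_*(\leq r)^* X \sqcup (r+1)_*(r+1)^* X$ in $\cat{S}^\cat{G}$, since each summand is $X$ on its rank support and initial elsewhere. Because $\bfF^\cO$ is a left adjoint and preserves coproducts, this will yield
\[B^{r+1}_p \;\cong\; B^r_p \;\sqcup\; \bfF^\cO\bigl((r+1)_*(r+1)^* \cO^p \cU^\cO \bfA\bigr).\]
Second, expanding $(r+1)^* \cU^\cO B^s_p = (r+1)^* \cO((\leq s)_*(\leq s)^* \cO^p \cU^\cO \bfA)$ as a sum over arities $s \geq 1$ with parts $r_1+\cdots+r_s = r+1$ and $r_i \geq 1$ shows that arity-$1$ contributions require an input in rank $r+1$; hence the map $(r+1)^* \cU^\cO B^r_p \to (r+1)^* \cU^\cO B^{r+1}_p$ is the inclusion of a coproduct summand, with complement $\cO(1) \otimes (r+1)^* \cO^p \cU^\cO \bfA \simeq (r+1)^* \cO^p \cU^\cO \bfA$ using Assumption \ref{ass:unitary}.

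Combining these facts, the simplicial-level pushout
\[B^r_p \;\cup_{\bfF^\cO((r+1)_*(r+1)^* \cU^\cO B^r_p)}\; \bfF^\cO\bigl((r+1)_*(r+1)^* \cU^\cO B^{r+1}_p\bigr)\]
will be equivalent to $B^{r+1}_p$. Taking geometric realization---which commutes with pushouts, with $\bfF^\cO$ and $\cU^\cO$ (by Section 8.3.3 of \cite{GKRW1}), and with the left adjoints $(r+1)_*$ and $(r+1)^*$---will then produce the square in the statement and identify its pushout with $\bfT^\bL_{r+1}(\bfA)$. The hard part will be upgrading this to a \emph{homotopy} pushout: one must verify that the left vertical map is a cofibration at each simplicial level (which it is, being $\bfF^\cO$ of a coproduct-summand inclusion in $\cat{S}^\cat{G}$) and that the whole simplicial square is Reedy cofibrant, so that geometric realization preserves the pushout in the homotopy category.
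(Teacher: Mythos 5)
Your proof is correct and takes a genuinely different route from the paper's. The paper first applies the left Quillen functor $(\leq r+1)_*^\mr{alg}$ to reduce to a homotopy cocartesian claim in the truncated category $\cat{Alg}_\cO(\cat{S}^{\cat{G}_{\leq r+1}})$, and then isolates a stand-alone lemma (with hypotheses asking for a weak equivalence in rank $\leq r$ and a homotopy pushout of underlying objects in rank $r+1$), whose proof resolves $\bfA$ and $\bfB$ by their own monadic bar constructions and runs an induction over simplicial degree $p$. You instead stay in $\cat{S}^\cat{G}$ and work directly with the models $\bfT^\bL_s(\bfA) = |B^s_\bullet|$, exploiting the splitting $(\leq r+1)_*(\leq r+1)^* \cong (\leq r)_*(\leq r)^* \sqcup (r+1)_*(r+1)^*$ together with the arity/rank formula for $(r+1)^*\cO(-)$ to identify the new summand of $(r+1)^*\cU^\cO B^{r+1}_p$ as $\cO(1)\otimes(r+1)^*\cO^p\cU^\cO\bfA$. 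Both arguments ultimately rely on the same two facts (the rank-$r+1$ expansion of $\cO(Z)$ for reduced $Z$, and $\cO(1)\simeq\ast$); yours avoids the truncate-then-extend reduction and the auxiliary lemma, at the cost of carrying the $(\leq s)_*(\leq s)^*$ twist inside the bar construction.

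One point you should make explicit: the levelwise square with $B^{r+1}_p$ in the bottom-right corner is a \emph{homotopy} pushout but not a strict pushout. The strict pushout is $B^r_p\amalg^\cO\bfF^\cO\bigl((r+1)_*(\cO(1)\otimes(r+1)^*\cO^p\cU^\cO\bfA)\bigr)$, and the canonical comparison map to $B^{r+1}_p = B^r_p\amalg^\cO\bfF^\cO\bigl((r+1)_*(r+1)^*\cO^p\cU^\cO\bfA\bigr)$ is a weak equivalence only because $\cO(1)\simeq\ast$ (this uses that operadic post-composition $\cO(n)\times\cO(1)^n\to\cO(n)$ is a weak equivalence, and that all objects involved are cofibrant). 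You already hint at this by writing ``equivalent'' rather than ``isomorphic,'' but it is the crux of the levelwise step and deserves to be spelled out; without it, a reader might think you are asserting a strict pushout. The cofibrancy of the left leg (hence homotopy pushout from the strict pushout) and the Reedy cofibrancy of all four columns, which you flag, then complete the argument as you intend.
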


\begin{proof}This diagram is obtained by applying $(\leq r+1)_*^\mr{alg}$ to a diagram in $\cat{Alg}_\cO(\cat{S}^\cat{G}_{\leq r+1})$, a functor which preserves homotopy cocartesian squares as it is a left Quillen functor. Hence it suffices to prove that the following is homotopy cocartesian in $\cat{Alg}_\cO(\cat{S}^{\cat{G}_{\leq r+1}})$:
	\[\begin{tikzcd}\bfF^{\cO}((r+1)^* \cU^\cO \bfT^\bL_r(\bfA)) \dar \rar &  (\leq r)^\mr{alg}_* (\leq r)^* \bfA \dar \\
\bfF^{\cO}((r+1)^* \cU^\cO \bfA) \rar & (\leq r+1)^* \bfA, \end{tikzcd} \]
	where $\bfF^\cO$ now denotes the free algebra functor $\cat{S}^{\cat{G}_{\leq r+1}} \to \cat{Alg}_\cO(\cat{S}^{\cat{G}_{\leq r+1}})$, $(\leq r)_*$ and $(r+1)_*$ denote the left adjoints to $(\leq r)^* \colon \cat{S}^{\cat{G}_{\leq r+1}} \to \cat{S}^{\cat{G}_{\leq r}}$ and $(\leq r+1)^* \cat{S}^{\cat{G}_{\leq r+1}} \to \cat{S}^{\cat{G}_{r+1}}$ respectively, and $(\leq r)^\mr{alg}_*$ denotes the left adjoint to $(\leq r)^* \colon \cat{Alg}_\cO(\cat{S}^{\cat{G}_{\leq r+1}}) \to \cat{Alg}_\cO(\cat{S}^{\cat{G}_{\leq r}})$.

The result now follows from the next lemma: substitute in its statement
\begin{align*}X &\leadsto  (r+1)^* \cU^\cO \bfT^\bL_r(\bfA) \\
Y &\leadsto (r+1)^* \cU^\cO \bfA \\
\bfA &\leadsto (\leq r)^\mr{alg}_* (\leq r)^* \bfA \\
\bfB &\leadsto \bfA.\end{align*}
Verifying condition (ii) uses that $\cO(1) \simeq \ast$.
\end{proof}

\begin{lemma} Suppose $\bfA,\bfB \in\cat{Alg}_\cO(\cat{S}^{\cat{G}_{\leq r+1}})$ are cofibrant in $\cat{S}^{\cat{G}_{\leq r+1}}$ and reduced, and $X,Y \in \cat{S}^{\cat{G}_{\leq r+1}}$ are cofibrant and concentrated in rank $r+1$. Then a commutative square
	\[\begin{tikzcd}\bfF^{\cO}(X) \dar \rar &  \bfA \dar \\
	\bfF^{\cO}(Y) \rar & \bfB, \end{tikzcd}\]
is homotopy cartesian in $\cat{Alg}_\cO(\cat{S}^{\cat{G}_{\leq r+1}})$ if the following two conditions hold:
\begin{enumerate}[\indent (i)]
	\item the map $(\leq r)^* \bfA \to (\leq r)^* \bfB$ is a weak equivalence,
	\item the commutative square
		\[\begin{tikzcd}(r+1)^* X \dar \rar &  (r+1)^*\cU^\cO\bfA \dar \\
	(r+1)^* Y \rar & (r+1)^*\cU^\cO\bfB \end{tikzcd}\]
	is homotopy cocartesian.
\end{enumerate}
\end{lemma}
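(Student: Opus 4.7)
The plan is to verify the homotopy cartesian condition by reducing, via right-Quillen properties, to a rank-by-rank check on underlying objects, and then exploit the facts that $X,Y$ are concentrated in rank $r+1$ and that $\cO$ is non-unitary with $\cO(1)\simeq \ast$ to isolate the two nontrivial ranks.

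First, since $\cU^\cO\colon \cat{Alg}_\cO(\cat{S}^{\cat{G}_{\leq r+1}}) \to \cat{S}^{\cat{G}_{\leq r+1}}$ is a right adjoint and preserves fibrations and weak equivalences (as a right Quillen functor), it preserves homotopy pullbacks and detects weak equivalences. Thus the square is homotopy cartesian in algebras iff the underlying square in $\cat{S}^{\cat{G}_{\leq r+1}}$ is, which can in turn be checked rank by rank after applying $(g)^*$ for each $g\in\cat{G}_{\leq r+1}$, since the evaluation functors $(g)^*$ are left adjoints that also commute with the relevant homotopy limits computed pointwise.

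Second, for $\kappa(g)\leq r$: I would compute $(g)^*\cU^\cO\bfF^\cO(X) = \cO(X)(g) = \bigsqcup_{d\geq 0}\cO(d)(g)\otimes_{\fS_d}X^{\otimes d}(g)$. Since $X$ is concentrated in rank $r+1$, the summand indexed by $d\geq 1$ is concentrated in rank $d(r+1)\geq r+1>r$, and the summand $d=0$ vanishes because $\cO(0)=\varnothing$ by non-unitarity. Hence both $(g)^*\cU^\cO\bfF^\cO(X)$ and $(g)^*\cU^\cO\bfF^\cO(Y)$ are initial. Combined with (i), the square at such $g$ has initial top row and a weak equivalence $\cU^\cO\bfA(g)\to\cU^\cO\bfB(g)$ on the right, so it is trivially homotopy cartesian (the homotopy pullback of $\varnothing \to \cU^\cO\bfB(g)\leftarrow\cU^\cO\bfA(g)$ is $\varnothing$, since the strict pullback is $\varnothing$ and $\varnothing$ is cofibrant).

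Third, for $\kappa(g) = r+1$: using $\cO(1)\simeq \ast$ and the same rank/Day-convolution argument, only the $d=1$ summand contributes, giving $(g)^*\cU^\cO\bfF^\cO(X)\simeq X(g)$ and similarly $(g)^*\cU^\cO\bfF^\cO(Y)\simeq Y(g)$. The cartesian condition at this rank becomes the assertion that $X(g)\to Y(g)\times^h_{\cU^\cO\bfB(g)}\cU^\cO\bfA(g)$ is a weak equivalence, and this is the main obstacle: condition (ii) supplies the dual (cocartesian) statement, and in a general, non-stable $\cat{S}$ these properties are inequivalent. I would therefore expect one of two things to happen at this step: either the paper intends \emph{cocartesian} in the conclusion (in which case the rank-$r+1$ check is immediate from (ii) and the same reduction above completes the proof symmetrically), or some additional structure—e.g.\ the maps in question being compatible with a stability/suspension setup, or the square being built from cofibrations with free algebra targets—is invoked to pass from cocartesian to cartesian at rank $r+1$. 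Resolving this bridge is where the real work of the proof should be concentrated.
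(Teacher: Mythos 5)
You are right that the word ``cartesian'' in the statement is a slip for ``cocartesian'': the lemma is quoted in the proof of Proposition \ref{prop:attach-one-step} precisely to produce a homotopy \emph{cocartesian} square, and the paper's own proof establishes cocartesianness. However, your claim that, granting this, ``the same reduction above completes the proof symmetrically'' is where your argument breaks down. Your reduction rests on the fact that $\cU^\cO$ is right Quillen, so homotopy pullbacks of algebras can be detected on underlying objects rank by rank. No such reduction is available for homotopy pushouts: $\cU^\cO$ does not preserve colimits, and the pushout $\bfF^\cO(Y)\cup_{\bfF^\cO(X)}\bfA$ in $\cat{Alg}_\cO(\cat{S}^{\cat{G}_{\leq r+1}})$ has underlying object very different from the pushout of underlying objects (it contains all operadic products of $\bfA$ with the new cells from $Y$). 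So condition (ii), which is a statement about underlying objects in rank $r+1$, does not ``immediately'' give cocartesianness in the category of algebras; identifying the underlying object of this algebra pushout is exactly the content of the lemma.

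The paper's proof supplies this missing bridge as follows: after arranging that $X\to Y$ is a cofibration, replace $\bfA$ and $\bfB$ by their monadic bar resolutions $B(\bfF^\cO,\cO,\cU^\cO(-))$, so that it suffices (by Reedy cofibrancy and the fact that geometric realization commutes with homotopy pushouts) to check cocartesianness levelwise, where all four corners are \emph{free} algebras. For free algebras the pushout along $\bfF^\cO(X)\to\bfF^\cO(Y)$ is again free, namely $\bfF^\cO(\cO^p(\cU^\cO(\bfA))\cup_X Y)$, and one must show the map $\cO^p(\cU^\cO(\bfA))\cup_X Y\to\cO^p(\cU^\cO(\bfB))$ is a weak equivalence; this is done by induction on $p$, using the rank decomposition of $\cO(Z)$ together with non-unitarity, reducedness of $\bfA,\bfB$, and $\cO(1)\simeq\ast$. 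Your computation that $\cO$ applied to an object concentrated in rank $r+1$ is trivial in ranks $\leq r$ and is $\cO(1)\otimes(-)\simeq\mathrm{id}$ in rank $r+1$ is correct and is precisely the input used in that induction, but it has to be deployed inside this free-resolution argument, not as a rank-by-rank check of the original square of algebras.
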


\begin{proof}We may assume without loss of generality that $\bfA$ and $\bfB$ are  cofibrant in $\cat{S}^{\cat{G}_{\leq r+1}}$ and $X \to Y$ is a cofibration between cofibrant objects. We can factor the commutative square as
	\[\begin{tikzcd} \bfF^{\cO}(X) \rar \dar & B(\bfF^\cO,\cO,\cU^\cO(\bfA)) \rar{\simeq} \dar & \bfA \dar \\
	\bfF^{\cO}(Y) \rar & B(\bfF^\cO,\cO,\cU^\cO(\bfB)) \rar{\simeq} & \bfB,\end{tikzcd}\]
where the horizontal maps are weak equivalences because $\bfA$ and $\bfB$ are cofibrant in $\cat{S}^{\cat{G}_{\leq r+1}}$.

The left square is the geometric realization of the following square of simplicial objects
	\[\begin{tikzcd} \left([p] \mapsto \bfF^{\cO}(X)\right) \rar \dar & \left([p] \mapsto \bfF^\cO(\cO^p(\cU^\cO(\bfA)))\right) \dar  \\
\left([p] \mapsto \bfF^{\cO}(Y)\right) \rar & \left([p] \mapsto \bfF^\cO(\cO^p(\cU^\cO(\bfB)))\right).\end{tikzcd}\]
All these simplicial objects are Reedy cofibrant; this is evident for the left entries, and for the right entries follows from another application of Lemma 9.14 of \cite{GKRW1}. Geometric realization of Reedy cofibrant simplicial objects is a homotopy colimit, and thus commutes with homotopy pushouts. In particular, a levelwise homotopy cocartesian diagram of Reedy cofibrant simplicial objects geometrically realizes to a homotopy cocartesian diagram. It thus suffices to prove that each of the levels
	\[\begin{tikzcd}\bfF^{\cO}(X) \rar \dar & \bfF^\cO(\cO^p(\cU^\cO(\bfA))) \dar  \\
	\bfF^{\cO}(Y) \rar & \bfF^\cO(\cO^p(\cU^\cO(\bfB)))\end{tikzcd}\]
is homotopy cocartesian. 

Since $X \to Y$ is a cofibration between cofibrant objects, the map from the homotopy pushout to the bottom-right corner is given by
\[\bfF^\cO(\cO^p(\cU^\cO(\bfA)) \cup_X Y) \lra \bfF^\cO(\cO^p(\cU^\cO(\bfB))).\]
This is a weak equivalence in $\cat{Alg}_{\cO}(\cat{S}^{\cat{G}_{\leq r+1}})$ if and only if the map on underlying objects is. Since $\bfF^\cO$ preserves weak equivalences between cofibrant objects, it suffices to prove that the map
\[\cO^p(\cU^\cO(\bfA)) \cup_X Y \lra \cO^p(\cU^\cO(\bfB))\]
is a weak equivalence. Indeed, both objects are cofibrant since $X \mapsto \cO(X)$ preserves cofibrant objects, as do pushouts along a cofibration.

We do this by induction over $p$. For $p=0$, we observe that since $X$ and $Y$ are concentrated in rank $r+1$, we have a commutative diagram 
\[\begin{tikzcd}(\leq r)^* \cU^\cO(\bfA) \rar \dar{\cong} &  (\leq r)^* \cU^\cO(\bfB) \dar[equals] \\
(\leq r)^* \cU^\cO(\bfA) \cup_X Y \rar & (\leq r)^* \cU^\cO(\bfB).\end{tikzcd}\]
Thus for ranks $\leq r$ the result follows from assumption (i). In rank $r+1$, the case $p=0$ follows from assumption (ii). This completes the proof of the initial case.

To prove the induction step, it suffices to prove the following statement: if $Z,W$ are reduced and $X,Y$ are concentrated in degree $r+1$, then if (i) $(\leq r)^* Z \to (\leq r)^* W$ is a weak equivalence and (ii) the commutative square
\[\begin{tikzcd}(r+1)^* X \dar \rar &  (r+1)^* Z \dar \\
(r+1)^* Y \rar & (r+1)^* W \end{tikzcd}\]
is homotopy cocartesian, then (i') $(\leq r)^* \cO(Z) \to (\leq r)^* \cO(W)$ is a weak equivalence and (ii') the commutative square
\begin{equation}\label{eqn:diag-final-po} \begin{tikzcd}(r+1)^* X \dar \rar &  (r+1)^*\cO(Z) \dar \\
(r+1)^* Y \rar & (r+1)^*\cO(W)\end{tikzcd}\end{equation}
is homotopy cocartesian.

Deducing (i) from (i') is done by noting that $(\leq r)^*$ commutes with $\cO$ and $\cO$ preserves weak equivalences between cofibrant objects. To deduce (ii') from (i) and (ii), we use the formula
\[(r+1)^* \cO(Z) = \bigsqcup_{n \geq 1} \cO(n) \otimes_{\fS_{n}} \left(\bigsqcup_{\substack{1 \leq r_1,\ldots,r_n \leq r+1 \\ \sum r_i = r+1}} (r_1)_*(r_1)^* Z \otimes \cdots \otimes (r_n)_* (r_n)^* Z\right)\]
and a similar one for $W$. To restrict the $r_i$ to positive integers, we used that $\cO$ is non-unitary, and that $Z$ and $W$ are reduced. From this expression, we see that \eqref{eqn:diag-final-po} is a coproduct of two commutative diagrams. The first is
\[\begin{tikzcd}\mathbbm{i}\dar \rar &  (r+1)^*\cO((\leq r)_* (\leq r)^*Z) \dar \\
\mathbbm{i}\rar & (r+1)^*\cO((\leq r)_* (\leq r)^* W), \end{tikzcd}\]
where $\mathbbm{i}$ is the initial object, which is homotopy cocartesian because the right map is a weak equivalence as a consequence of (i). The second is
\[\begin{tikzcd}(r+1)^* X \dar \rar &  \cO(1) \otimes (r+1)^* Z\dar \\
(r+1)^* Y\rar &  \cO(1) \otimes (r+1)^* W,\end{tikzcd}\]
which homotopy cocartesian by (ii) since $\cO(1) \simeq \ast$.
\end{proof}

We thus get a sequence of maps
\[\bfT_0^\bL(\bfA) \lra \bfT_1^\bL(\bfA) \lra \bfT_2^\bL(\bfA) \lra \cdots\]
whose homotopy colimit is naturally weakly equivalent to $\bfA$ and whose homotopy cofibers we understand. This is the  \emph{rank completion filtration}. 

When we can make sense of homology, e.g.\ in one of the settings mentioned in Section 10.1 of \cite{GKRW1}, we get a corresponding spectral sequence converging conditionally to the homology of $\cU^\cO(\bfA)$. The $E^1$-page will be rather unwieldy, and we believe the following spectral sequence may be more useful:

\begin{remark}\label{rem:o-homology-ss} 
Let $(-)_+$ denote the monad whose underlying functor takes the coproduct with the terminal object (so that algebras over it are pointed objects). As $\cO$ is a \emph{non-unitary} operad in simplicial sets, cf.\ Assumption \ref{ass:unitary}, there is a canonical map of monads from $\cO$ to $(-)_+$ which can be viewed as an augmentation of $\cO$. This augmentation is given on $X \in \cat{S}^\cat{G}$ by the map $\cO(X) = \bigsqcup_{n \geq 1} \cO(n) \otimes_{\fS_{n}} X^{\otimes n} \to X_+$ which on the summand $\cO(1) \otimes X$ is the map $\cO(1) \otimes X \to \ast \otimes X = X$ and on the summands $\cO(n) \otimes_{\fS_{n}} X^{\otimes n}$ for $n \geq 2$ is the unique map to the terminal object.

Taking indecomposables with respect to this augmentation, we obtain the $\cO$-indecomposables functor $Q^\cO \colon \cat{Alg}_\cO(\cat{S}^\cat{G}) \to \cat{S}_\ast^\cat{G}$ determined uniquely up to isomorphism by demanding that $Q^\cO F^\cO \cong (-)_+$ and that $Q^\cO$ commutes with sifted colimits. Applying its left-derived functor $Q^\cO_\bL$ to the diagram in the previous proposition, we get that if $\bfA$ is reduced there is a homotopy cocartesian square in $\cat{S}_*^\cat{G}$:
		\[\begin{tikzcd}(r+1)_* (r+1)^* \cU^\cO \bfT^\bL_r(\bfA)_+ \dar \rar &  Q^{\cO}_\bL(\bfT^\bL_r (\bfA)) \dar \\
	(r+1)_* (r+1)^* \cU^\cO \bfT^\bL_{r+1}(\bfA)_+ \rar & Q^\cO_\bL(\bfT^\bL_{r+1}(\bfA)). \end{tikzcd} \]

When we can make sense of homology, we can define $\cO$-homology by $\smash{H^{\cO}_{g,d}}(\bfA) \coloneqq \smash{\tilde{H}_d((g)^*Q^\cO_\bL(\bfA))}$. The result of the previous discussion is a conditionally convergent spectral sequence (suppressing the filtration degree, so in particular the $p$ in $E^1_{p,q}$ refers to rank)
	\[E^1_{p,q} = H_{p+q}((p)^* \bfT^\bL_p(\bfA),(p)^* \bfT^\bL_{p-1}(\bfA)) \Longrightarrow H^\cO_{p,p+q}(\bfA),\]
where it may be helpful to recall that $(p)^* \bfT^\bL_p(\bfA) \cong (p)^* \bfA$. 
\end{remark}

\section{An $\cE_k$-algebraic analogue of the May-Milgram filtration} To deduce our results, we specialize the results of the previous section to $\cO = \cE_k$, the non-unital little $k$-cubes operad. Recall that $\I$ denotes the open interval $(0,1)$, and let $\mr{Emb}^\mr{rect}(\bigsqcup_n \bI^k,\bI^k)$ denote the space of ordered $n$-tuples of rectilinear embeddings $\I^k \to \I^k$ with disjoint image (that is, they are a composition of translation and dilation by positive real numbers in each of the $k$ directions).

\begin{definition}The \emph{non-unital little $k$-cubes operad} $\cE_k$ has topological space $\cE_k(n)$ of $n$-ary operations given by
	\[\cE_k(n) \coloneqq \begin{cases} \varnothing & \text{if $n=0$,} \\
	\mr{Emb}^\mr{rect}(\bigsqcup_n \bI^k,\bI^k) & \text{if $n>0$,}\end{cases}\]
	with symmetric group $\fS_n$ permuting the $n$-tuples. The unit in $\cE_k(1)$ is the identity map $\I^k \to \I^k$, and composition is induced by composition of embeddings.\end{definition}

This satisfies Assumption \ref{ass:unitary} and hence gives rise to an operad in $\cat{S}^\cat{G}$, all of whose objects are concentrated on the monoidal unit of $\cat{G}$. $\cE_k$-algebras in $\cat{S}^\cat{G}$ are algebras over this operad, and we shall adopt the shorter notation $\bfE_k$ for the free $\cE_k$-algebra functor $\bfF^{\cE_k}$. (If $\cat{S} = \cat{Top}$ and $\cat{G} = \ast$, as a consequence of our conventions these are algebras over the operad $|\mr{Sing}(\cE_k)|$ in topological spaces.)

We shall take $\cat{G} = \bN$, with $\kappa \colon \bN \to \bN$ the identity functor. Let \[\cU^{\cE_{k+m}}_{\cE_k} \colon \cat{Alg}_{\cE_{k+m}}(\cat{S}^\bN) \lra \cat{Alg}_{\cE_{k}}(\cat{S}^\bN)\]
denote the forgetful functor induced by the map of operads $\cE_{k} \to \cE_{k+m}$ given by sending a cube $e \colon \I^k \to \I^k$ to $e \times \mr{id}_{\I^m} \colon \I^k \times \I^m \to \I^k \times \I^m$. For the sake of brevity we will often write $\cU$ for $\cU^{\cE_{k+m}}_{\cE_k}$.

We are interested in free algebras on a point, which we will consider concentrated in rank $1$. In this section we will more generally study $\bfE_{k+m}(X)$ for $X \in \cat{S}^\bN$ satisfying a similar condition:

\begin{assumption}$X \in \cat{S}^\bN$ is concentrated in rank $1$, i.e.\ $X(g)$ is initial unless $g=1$ (so in particular reduced), and $X$ is cofibrant.
\end{assumption}

We will give an elementary geometric model for the $\cE_k$-algebra $\bfT^\bL_r(\cU \bfE_{k+m}(X))$, and use Proposition \ref{prop:attach-one-step} to describe $\cU \bfE_{k+m}(X)$ up to weak equivalence as a colimit of iterated pushouts along maps of free $\cE_k$-algebras. The following definition was mentioned in the introduction:

\begin{definition}For a manifold $M$ and $n \geq 1$, the topological space $F_n(M)$ of \emph{ordered configurations of $n$ points in $M$} is given by $\{(m_1,\ldots,m_n) \mid \text{$m_i \neq m_j$ if $i \neq j$}\} \subset M^n$. For $n=0$ we define $F_n(M) = \varnothing$.

\end{definition}

We choose to define $F_0(M)$ to be empty since we work with non-unital $\cE_k$-algebras. For $n>0$, the topological space $F_n(M)$ is homeomorphic to the space of embeddings of the set $\{1,\ldots,n\}$ into $M$, and precomposition by permutations of $\{1,\ldots,n\}$ defines a $\fS_n$-action on the space $F_n(M)$. Taking the singular simplicial set, these assemble to a symmetric sequence $F(M)$ in $\cat{sSet}$. For $M=\I^k \times \I^m$, this is a left $\cE_{k+m}$-module, where the action is given by composition of embeddings. 

Just like we used the enrichment of copowering of $\cat{S}^\bN$ over $\cat{sSet}$ to make the operad $\cE_k$ in $\cat{sSet}$ into an operad in $\cat{S}^\bN$, we use it to make the left $\cE_{k+m}$-module $F(\I^k \times \I^m)$ in $\cat{sSet}$ into a left $\cE_{k+m}$-module in $\cat{S}^\bN$. Analogously to the free $\cE_k$-algebra construction, we can take the composition product of $F(\I^k \times \I^m) \in (\cat{S}^\bN)^\cat{G}$ with an object $X \in \cat{S}^\bN$ considered as a symmetric sequence concentrated in cardinality $0$. We refer to this as ``applying'' $F(\I^k \times \I^m)$ to $X$. The resulting object $F(\I^k \times \I^m)(X) \in \cat{S}^\bN$ comes endowed with an $\cE_{k+m}$-algebra structure. This construction is natural in $X$, and thus we obtain a functor $\bfF(\bI^k \times \bI^m) \colon \cat{S}^\bN \to \cat{Alg}_{\cE_{k+m}}(\cat{S}^\bN)$. 

\begin{definition}We let $F^{[r]}_n(\I^k \times \I^m)$ denote the subspace of $F_n(\I^k \times \I^m)$ consisting of ordered configurations $\eta = (m_1,\ldots,m_n)$ such that for all $x \in \bI^k$ the intersection $\eta \cap (\{x\} \times \bI^m)$ has cardinality at most $r$.\end{definition}

\begin{figure}
	\begin{tikzpicture}
		\draw (0,0) rectangle (4,4);
		\node at (0.5,1) {$\bullet$};
		\node at (0.5,1.5) {$\bullet$};
		\node at (1.5,.3) {$\bullet$};
		\node at (1.5,1.5) {$\bullet$};	
		\node at (1.5,1.9) {$\bullet$};
		\node at (1.5,3.1) {$\bullet$};	
		\node at (2.5,1.6) {$\bullet$};
		\node at (2.5,2.5) {$\bullet$};
		\draw [dotted] (0.5,0) -- (0.5,4);
		\draw [dotted] (1.5,0) -- (1.5,4);
		\draw [dotted] (2.5,0) -- (2.5,4);
		\node at (2,0) [below] {$\bI^k = \bI$};
		\node at (0,2) [left] {$\bI^m = \bI$};
	\end{tikzpicture}
	\caption{An element of $F_8(\I^k \times \I^m)$ for $k=1$, $m=1$ (suppressing the labels on the point for the sake of clarity) which is in $F^{[r]}_8(\bI^k \times \bI^m)$ when $r \geq 4$, but not when $r<4$.}
	\label{fig:examfr}
\end{figure}
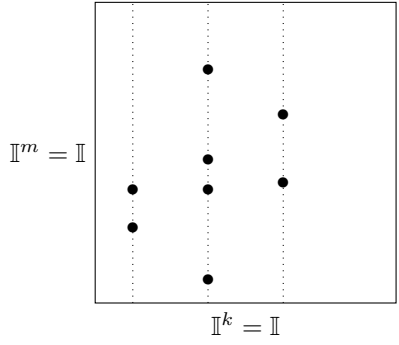
	
Since the condition defining $F^{[r]}_n(\I^k \times \I^m)$ is invariant under the $\fS_n$-action, these topological spaces may be assembled into a symmetric sequence $F^{[r]}(\I^k \times \I^m) \subset F(\I^k \times \I^m)$ in $\cat{sSet}$ and by the copowering also in $\cat{S}^\bN$. The left $\cE_{k+m}$-module structure on $F(\I^k \times \I^m)$ does not restrict. However, using the map of operads $\cE_k \to \cE_{k+m}$ induced by the inclusion $\I^k \to \I^k \times \I^m$ on the first $k$ coordinates, we get a left $\cE_k$-module structure on $F(\I^k \times \I^m)$ which \emph{does} restrict and application of this symmetric sequence gives a functor $\bfF^{[r]}(\I^k \times \I^m)  \colon \cat{S}^\bN \to \cat{Alg}_{\cE_k}(\cat{S}^\bN)$.

As we assumed that $X$ is cofibrant, we can use a monadic bar resolution to give an explicit formula for $\bfT^\bL_r(\cU\bfE_{k+m}(X)) \in \cat{Alg}_{\cE_k}(\cat{S}^\bN)$:
\[\bfT^\bL_r(\cU\bfE_{k+m}(X)) = B(\bfE_k(\leq r)_*,\cE_k,(\leq r)^* \cU\bfE_{k+m}(X)).\]
We take this specific model for the domain of the map in the following proposition:

\begin{proposition}\label{prop:alpha-r} There are weak equivalences
\[\alpha_r \colon \bfT^\bL_r(\cU\bfE_{k+m}(X)) \lra \bfF^{[r]}(\I^k \times \I^m)(X),\]
of $\cE_k$-algebras, which fit into commutative diagrams for $r \geq 0$
\begin{equation}\label{eqn:comm-alpha-r} \begin{tikzcd} \bfT^\bL_r(\cU\bfE_{k+m}(X)) \rar \dar{\alpha_r} & \bfT^\bL_{r+1}(\cU \bfE_{k+m}(X)) \dar{\alpha_{r+1}} \\
 \bfF^{[r]}(\I^k \times \I^m)(X) \rar &  \bfF^{[r+1]}(\I^k \times \I^m)(X).\end{tikzcd}\end{equation}
\end{proposition}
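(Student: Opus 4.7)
The plan is to construct the map $\alpha_r$ on the simplicial level and then prove it is a weak equivalence by induction on $r$ using Proposition \ref{prop:attach-one-step}.

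\emph{Construction.} Using the formula $\bfT^\bL_r(\cU\bfE_{k+m}(X)) = |B_\bullet(\bfE_k(\leq r)_*,\cE_k,(\leq r)^*\cU\bfE_{k+m}(X))|$ from \eqref{eqn:tr-explicit}, I will construct $\alpha_r$ as the realization of a map of simplicial $\cE_k$-algebras into the constant simplicial object $\bfF^{[r]}(\I^k\times\I^m)(X)$. By the free-forgetful adjunction, such a map at simplicial level $p$ is determined by a map of symmetric sequences in $\cat{S}^{\bN_{\leq r}}$
\[\cE_k^p\bigl((\leq r)^*\cU\bfE_{k+m}(X)\bigr) \to (\leq r)^*\cU^{\cE_k}\bfF^{[r]}(\I^k\times\I^m)(X).\]
The key ingredient is the barycenter map $\cE_{k+m}(n) \to F_n(\I^k\times\I^m)$, which is a $\fS_n$-equivariant weak equivalence and intertwines the left $\cE_{k+m}$-module structures, hence the left $\cE_k$-module structures after restricting along $\cE_k \to \cE_{k+m}$. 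This produces an $\cE_k$-algebra map whose rank-$\leq r$ restriction factors through $\bfF^{[r]}(\I^k\times\I^m)(X)$, because any configuration of at most $r$ total points tautologically has at most $r$ points per $\I^k$-fiber. I iterate the left $\cE_k$-action on $F(\I^k\times\I^m)$ to evaluate the $\cE_k^p$; the $F^{[r]}$-constraint is preserved because $\cE_k$-operations place clusters into cubes with disjoint $\I^k$-projections. Simpliciality follows from associativity of the module action, and naturality of the construction in $r$ gives \eqref{eqn:comm-alpha-r}.

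\emph{Induction.} The base case $r=0$ is immediate since both sides are the initial $\cE_k$-algebra: $(\leq 0)^*\cU\bfE_{k+m}(X) = \varnothing$ (as $X$ is reduced and $\cE_{k+m}$ non-unitary), and $F^{[0]}_n = \varnothing$ for all $n$. For the inductive step, apply Proposition \ref{prop:attach-one-step} to $\bfA = \cU\bfE_{k+m}(X)$, and map the resulting homotopy cocartesian square via $\alpha_r$, $\alpha_{r+1}$ on the right-hand column and the induced $\bfE_k((r+1)_*(r+1)^*\cU(-))$-maps on the left to the analogous target square with right-hand column $\bfF^{[r]}(\I^k\times\I^m)(X) \to \bfF^{[r+1]}(\I^k\times\I^m)(X)$. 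Three of the four comparison maps are weak equivalences: the top-right is $\alpha_r$ by induction; the top-left inherits this from $\alpha_r$ via $\bfE_k((r+1)_*(r+1)^*\cU(-))$; and the bottom-left, at rank $r+1$, is (after the isomorphism $(r+1)^*\cU\bfT^\bL_{r+1}\bfA \cong (r+1)^*\cU\bfA$ from Proposition \ref{prop:attach-one-step}) the barycenter map $\cE_{k+m}(r+1) \to F_{r+1}(\I^k\times\I^m)$ tensored over $\fS_{r+1}$ with $X^{\otimes r+1}(r+1)$, a weak equivalence because the barycenter is a $\fS_{r+1}$-equivariant weak equivalence between objects with free $\fS_{r+1}$-actions. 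To verify that the target square is homotopy cocartesian, I apply the lemma following Proposition \ref{prop:attach-one-step}: its condition (i) reduces to the tautology $F^{[r]}_n = F^{[r+1]}_n = F_n$ for $n \leq r$, and its condition (ii) becomes, after $(r+1)^*$, the square whose horizontals are the $\cE_k(1)$-action maps, which are weak equivalences since $\cE_k(1) \simeq \ast$ (exactly as in the final step of the proof of Proposition \ref{prop:attach-one-step}). A 3-out-of-4 argument for maps of homotopy cocartesian squares then yields that $\alpha_{r+1}$ is a weak equivalence.

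\emph{Main obstacle.} The technical care points are (a) writing $\alpha_r$ precisely at each simplicial level and verifying its simpliciality via the left $\cE_k$-module structure and the bar differentials, (b) managing cofibrancy hypotheses so that $\bfE_k$ and the various functors $(r+1)_*$, $(r+1)^*$, $(\leq r)^*$, $\cU$ preserve the weak equivalences we need, and (c) correctly identifying the bottom-left map in the comparison square as, up to the canonical isomorphism from Proposition \ref{prop:attach-one-step}, the barycenter map at rank $r+1$, which is the geometric input that drives the induction.
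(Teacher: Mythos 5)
Your construction of $\alpha_r$ is essentially identical to the paper's: both use the barycenter map $\cE_{k+m}(n)\to F_n(\I^k\times\I^m)$ of left $\cE_{k+m}$-modules inserted into the bar construction and restricted along $\cE_k\to\cE_{k+m}$, with the rank truncation forcing the image to land in $F^{[r]}$. The difference is in how you prove $\alpha_r$ is a weak equivalence: the paper argues directly (after reducing to a map of symmetric sequences $a_r$) that $a_r$ is a microfibration with weakly contractible fibers (Lemmas~\ref{ismicro} and \ref{iscontractible}), whereas you propose an induction on $r$ leveraging Proposition~\ref{prop:attach-one-step}.

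The inductive step has a genuine gap, and it is exactly at the point you mark as a ``care point'': establishing that the target square
\[
\begin{tikzcd}
\bfE_k((r+1)_*(r+1)^*\cU\bfF^{[r]}(\I^k\times\I^m)(X)) \rar\dar & \bfF^{[r]}(\I^k\times\I^m)(X) \dar \\
\bfE_k((r+1)_*(r+1)^*\cU\bfF^{[r+1]}(\I^k\times\I^m)(X)) \rar & \bfF^{[r+1]}(\I^k\times\I^m)(X)
\end{tikzcd}
\]
is homotopy cocartesian in $\cat{Alg}_{\cE_k}(\cat{S}^\bN)$. The lemma following Proposition~\ref{prop:attach-one-step} lives in the truncated category $\cat{Alg}_\cO(\cat{S}^{\cat{G}_{\leq r+1}})$; its conclusion is only that the restricted square is homotopy cocartesian \emph{there}. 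That is strictly weaker: it says the homotopy pushout agrees with $\bfF^{[r+1]}(\I^k\times\I^m)(X)$ in ranks $\leq r+1$, but says nothing in higher ranks. In the paper's Proposition~\ref{prop:attach-one-step} this suffices because the bottom-right corner $\bfT^\bL_{r+1}(\bfA)$ is \emph{by definition} $(\leq r+1)_*^{\mr{alg}}$ of something in $\cat{Alg}_\cO(\cat{S}^{\cat{G}_{\leq r+1}})$, so one can check cocartesianness after restriction and transport back with the left Quillen functor $(\leq r+1)_*^{\mr{alg}}$. By contrast, $\bfF^{[r+1]}(\I^k\times\I^m)(X)$ is not defined this way, and to transport the truncated cocartesianness you would need the counit $\bfT^\bL_{r+1}(\bfF^{[r+1]}(\I^k\times\I^m)(X))\to\bfF^{[r+1]}(\I^k\times\I^m)(X)$ to be a weak equivalence, i.e.\ that $\bfF^{[r+1]}(\I^k\times\I^m)(X)$ is generated in ranks $\leq r+1$. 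But once the source square is homotopy cocartesian and your other three comparison maps are weak equivalences, the target square being homotopy cocartesian is \emph{equivalent} to $\alpha_{r+1}$ being a weak equivalence — so this step is circular. You would need an independent proof that $\bfF^{[r+1]}(\I^k\times\I^m)(X)$ is a homotopy pushout of $\bfF^{[r]}(\I^k\times\I^m)(X)$ along the displayed free-algebra maps, which is essentially the content of the proposition re-expressed in terms of $\bfF^{[\cdot]}$, and is what the paper's microfibration argument is designed to supply.
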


Let us start by defining the maps:

\begin{lemma} There are maps $\alpha_r \colon \bfT^\bL_r(\cU\bfE_{k+m}(X)) \lra \bfF^{[r]}(\I^k \times \I^m)(X)$ of $\cE_k$-algebras making Diagram (\ref{eqn:comm-alpha-r}) commute.
\end{lemma}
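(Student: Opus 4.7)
The plan is to construct $\alpha_r$ level-wise on the two-sided monadic bar construction, with all structure traceable back to a single geometric input, the \emph{evaluation-at-center} map of symmetric sequences. First I would introduce the map $\mathrm{ev} \colon \cE_{k+m} \to F(\bI^k \times \bI^m)$ sending a tuple $(e_1,\ldots,e_n)$ of rectilinear embeddings to the configuration $(e_1(c),\ldots,e_n(c))$ of images of the center $c = (1/2,\ldots,1/2)$. A direct computation with operadic composition shows that $\mathrm{ev}$ is a map of left $\cE_{k+m}$-modules, and hence (by restriction along $\cE_k \to \cE_{k+m}$) of left $\cE_k$-modules. Applying to $X$ yields a map of $\cE_k$-algebras $\cU\bfE_{k+m}(X) \to \cU\bfF(\bI^k \times \bI^m)(X)$ in $\cat{S}^\bN$.

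Next I would observe that, since $X$ is concentrated in rank $1$, the rank-$n$ part of either side receives contributions only from the arity-$n$ summand; and when $n \leq r$, the condition defining $F_n^{[r]}$ is vacuous, so $F_n = F_n^{[r]}$. Therefore after restriction via $(\leq r)^*$ this map factors through the $\cE_k$-subalgebra $\cU\bfF^{[r]}(\bI^k \times \bI^m)(X)$, giving
\[\widetilde{\mathrm{ev}}_r \colon (\leq r)^* \cU \bfE_{k+m}(X) \lra (\leq r)^* \cU \bfF^{[r]}(\bI^k \times \bI^m)(X) \qquad \text{in } \cat{Alg}_{\cE_k}(\cat{S}^{\bN_{\leq r}}).\]

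With $\widetilde{\mathrm{ev}}_r$ in hand, I would define $\alpha_r$ on the $p$-th level of the bar construction $B(\bfE_k(\leq r)_*,\cE_k,(\leq r)^*\cU\bfE_{k+m}(X))$ as the $\cE_k$-algebra map adjoint (under $\bfE_k \dashv \cU$ and $(\leq r)_* \dashv (\leq r)^*$) to the composite
\[\cE_k^p (\leq r)^* \cU \bfE_{k+m}(X) \lra (\leq r)^* \cU \bfE_{k+m}(X) \xrightarrow{\widetilde{\mathrm{ev}}_r} (\leq r)^* \cU \bfF^{[r]}(\bI^k \times \bI^m)(X)\]
in $\cat{S}^{\bN_{\leq r}}$, the first arrow being the $p$-fold iterated $\cE_k$-algebra structure map on $(\leq r)^*\cU\bfE_{k+m}(X)$. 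Simplicial compatibility—face relations from associativity of the algebra structure, degeneracies from its unitality, combined with naturality of $\widetilde{\mathrm{ev}}_r$—is a routine check, and taking geometric realization in $\cat{Alg}_{\cE_k}(\cat{S}^\bN)$ produces $\alpha_r$.

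Commutativity of (\ref{eqn:comm-alpha-r}) is then verified levelwise: the top map is induced by the inclusion $\cat{G}_{\leq r} \hookrightarrow \cat{G}_{\leq r+1}$ and the bottom by the subspace inclusion $F^{[r]} \hookrightarrow F^{[r+1]}$, and both composites around the square are built from the single underlying map $\mathrm{ev}$ pulled back or pushed forward along these inclusions, so the diagram commutes tautologically. The main obstacle will not be any single step but rather the layered bookkeeping of adjunctions, monads, and the two-sided bar construction; once $\mathrm{ev}$ is isolated as the sole geometric input, the remainder of the construction and its naturality are formal.
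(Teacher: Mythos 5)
Your construction is correct and is essentially the same as the paper's: both isolate the evaluation-at-centers map $\cE_{k+m} \to F(\I^k \times \I^m)$ of left modules, observe that its rank-$\leq r$ truncation lands in $F^{[r]}$ (the $\leq r$-per-fiber condition is vacuous when $n \leq r$), and then build $\alpha_r$ levelwise on the monadic bar construction. The paper packages the final step as an explicit augmentation $a_r$ at the level of symmetric sequences in $\cat{Top}$ (a formulation it needs later for the microfibration argument), but the resulting map agrees with your adjoint of $\widetilde{\mathrm{ev}}_r$, since the $\cE_k$-algebra structure on $\bfF^{[r]}(\I^k\times\I^m)(X)$ used in your adjunction is precisely the paper's ``glue the configurations along the little cubes'' formula.
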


\begin{proof}The map $\cE_{k+m} \to F(\I^k \times \I^m)$ which sends a cube to its center is a homotopy equivalence of left $\cE_{k+m}$-modules in symmetric sequences, so we have an induced weak equivalence $\bfE_{k+m}(X) \to \bfF(\bI^k \times \bI^m)(X)$ of $\cE_{k+m}$-algebras. To define $\alpha_r$, we first insert this weak equivalence into the right entry of the bar construction
\[\begin{tikzcd}{|B_\bullet(\bfE_k(\leq r)_*,\cE_k,(\leq r)^* \cU\bfE_{k+m}(X))|} \dar{\simeq} \\ {|B_\bullet(\bfE_k(\leq r)_*,\cE_k,(\leq r)^* \cU\bfF(\I^k \times \I^m)(X))|}.\end{tikzcd}\]

The assumption that $X$ is concentrated in rank $1$ gives us an isomorphism 
\[(\leq r)^*\cU\bfF(\I^k \times \I^m)(X)) \cong \cU((\leq r)^* \bfF(\I^k \times \I^m))((\leq r)^* X).\]
Here $(\leq r)^* F(\I^k \times \I^m)$ is an object in the truncated symmetric sequence category  (functors from the category of possibly empty finite sets of cardinality $\leq r$ into $\cat{sSet}$ with tensor product the restriction of the composition product) and $(\leq r)^* \bfF(\I^k \times \I^m)$ is the functor given by tensoring with $(\leq r)^* F(\I^k \times \I^m)$.

Because $\otimes$ commutes with colimits in each variable and geometric realization, the target is obtained by applying the symmetric sequence
\[|B_\bullet(\cE_k(\leq r)_*,\cE_k,(\leq r)^* F(\bI^{k+m})|\]
in $\cat{Top}$ to $X$ (as always, via $\mr{Sing}$ and the simplicial copowering). We define a map of left $\cE_k$-modules in symmetric sequences in $\cat{Top}$
\[a_r \colon |B_\bullet(\cE_k (\leq r)_*,\cE_k,(\leq r)^* F(\bI^{k+m}))| \lra F^{[r]}(\I^k \times \I^m)\]
by describing an augmentation from $B_\bullet(\cE_k (\leq r)_*,\cE_k,(\leq r)^* F(\bI^{k+m}))$ to $F^{[r]}(\I^k \times \I^m)$. The $0$-simplices of the former are given by
\[\cE_k(\leq r)_*(\leq r)^* F(\bI^{k+m}) = \bigsqcup_{n \geq 1} \cE_k(n) \times_{\fS_n} \bigsqcup_{1 \leq k_1 ,\ldots,k_n\leq r} F_{k_i}(\I^k \times \I^m)\]
where the rank of each component is $k_1 + ...+k_n$. Given a collection of embeddings $e_i \colon \I^k \to \I^k$ and configurations $\xi_i \in F_{k_i}(\I^k \times \I^m)$, we may take the union of the images $(e_i \times \mr{id}_{\I^m})(\xi_i)$ in $\I^k \times \I^m$ and obtain an ordered configuration of $k_1+\ldots+k_m$ points such that no subset $\{x\} \times \bI^m$ contains more than $r$ points. This map is easily seen to be compatible with the left $\cE_k$-module structures. That the diagram commutes is clear from the definition.\end{proof}

We next prove that each $a_r$ is a weak homotopy equivalence, using a microfibration argument.

\begin{definition}
	A map $\pi \colon E \to B$ of topological spaces is a \emph{microfibration} if for each $i \geq 0$ and commutative diagram
	\[\begin{tikzcd} D^i \times \{0\} \rar[hook]{h} \dar & E \dar{\pi} \\
	D^i \times [0,1] \rar{H} & B, \end{tikzcd}\]
	there exists an $\epsilon>0$ and a partial lift $\tilde{H} \colon D^i \times [0,\epsilon] \to E$, i.e.\ $\pi \circ \tilde{H} = H|_{D^i \times [0,\epsilon]}$ and $\tilde{H}|_{D^i \times \{0\}} = h$.
\end{definition}

\begin{lemma}[Lemma 2.2 of \cite{weissclassify}] 	\label{lem:microW}
	If $\pi \colon E \to B$ is a microfibration with weakly contractible fibers, then $\pi$ is a weak homotopy equivalence.
\end{lemma}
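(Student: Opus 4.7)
I would establish the following relative lifting property, from which the weak equivalence claim is immediate: for every $n \geq 0$ and every commutative square
\[\begin{tikzcd} \partial D^n \rar{g} \dar & E \dar{\pi} \\ D^n \rar{f} & B, \end{tikzcd}\]
there exists $\tilde f \colon D^n \to E$ with $\tilde f|_{\partial D^n} = g$ and $\pi \tilde f$ homotopic to $f$ rel $\partial D^n$. This is the standard input for showing that $\pi_* \colon \pi_n(E, e) \to \pi_n(B, \pi(e))$ is bijective for all $n \geq 0$ and all basepoints $e$ (surjectivity directly; injectivity by applying the same statement with $D^n$ replaced by $D^{n+1}$ and $\partial D^n$ by $S^n$).

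Next, I would pull $\pi$ back along $f$ to obtain $\pi' \colon E' \to D^n$. Both conditions pass through pullback: a short diagram chase from the universal property shows that a pullback of a microfibration is again a microfibration, and the fibers of $\pi'$ over $x$ are literally the fibers of $\pi$ over $f(x)$, hence weakly contractible. The section $g$ of $f$ becomes a section $s \colon \partial D^n \to E'$ of $\pi'$, and the problem reduces to: given such a microfibration $\pi'$ over $D^n$ with weakly contractible fibers and a section $s$ on $\partial D^n$, produce a map $\tilde f \colon D^n \to E'$ extending $s$ with $\pi' \tilde f$ homotopic to $\mr{id}_{D^n}$ rel $\partial D^n$.

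For the main geometric step, the microfibration property applied to the radial contraction $H \colon \partial D^n \times I \to D^n$, $(x, t) \mapsto (1-t)x$, starting from $s$, yields an extension of $s$ to a section over a collar $\{x : 1 - \epsilon \leq |x| \leq 1\}$ for some $\epsilon > 0$. To fill the rest of $D^n$, I would cover it by finitely many small closed cubes on which the microfibration admits partial lifts, then patch these local lifts together. Two local lifts over an overlap take values in a common fiber, so their discrepancy defines a map from a sphere-like region into that fiber, which is null-homotopic by weak contractibility; the null-homotopy allows us to adjust the local lifts until they agree. Iterating this over the combinatorics of the cover, by induction on the dimension of intersection strata, produces the desired global map $\tilde f$, and the same construction simultaneously yields the required homotopy $\pi' \tilde f \simeq \mr{id}$ rel $\partial D^n$.

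The main obstacle is precisely this patching step. Microfibrations are much weaker than Serre fibrations: they provide lifts only on arbitrarily small neighborhoods of the ``initial time,'' with no coherent path-lifting function, and naive iteration can shrink allowed step sizes to zero. The coherent combination of the microfibration property (to produce local lifts) with weak contractibility of fibers (to make them compatible on overlaps) is the technical heart of the argument, and is carried out in the cited paper of Weiss.
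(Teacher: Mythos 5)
The paper itself does not prove this lemma; it is cited verbatim from Weiss's Lemma 2.2, so there is no in-text proof to compare against. Assessing your sketch on its own terms: the reduction to the relative lifting property, the pullback along $f$, and the observation that microfibrations and weakly contractible fibers are both stable under pullback are all correct and standard, as is the compactness argument producing a partial section over a collar of $\partial D^n$.

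The gap is in the patching step, which you rightly flag as the heart of the matter but then describe incorrectly. The claim ``two local lifts over an overlap take values in a common fiber, so their discrepancy defines a map from a sphere-like region into that fiber'' does not parse: two local sections $\sigma_1, \sigma_2$ over an overlap $U$ land in $(\pi')^{-1}(U)$, and at each $x \in U$ the two values $\sigma_1(x), \sigma_2(x)$ lie in the fiber $(\pi')^{-1}(x)$, but that fiber \emph{varies} with $x$, so there is no single fiber receiving a ``discrepancy map,'' and weak contractibility of one fiber does not directly yield a null-homotopy of anything. What one would actually need to reconcile $\sigma_1$ and $\sigma_2$ is a fiberwise homotopy through sections; producing such a homotopy is itself a lifting problem for the microfibration, and the microfibration property only yields lifts for arbitrarily short times, so a naive induction over the strata of a cube cover meets the same ``shrinking $\epsilon$'' obstacle you mention at the end, merely relocated. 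The final assertion that ``the same construction simultaneously yields the required homotopy $\pi'\tilde f \simeq \mathrm{id}$ rel $\partial D^n$'' is likewise unsupported once the construction is left unspecified. In short, the setup and the identification of the difficulty are right, but the proposed mechanism for combining microfibration lifts with weak contractibility is incorrect as stated, and that mechanism is precisely what the cited argument of Weiss supplies.
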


Our strategy is to prove that $a_r$ is a microfibration with weakly contractible fibers. To do this, we use the following lemma in point-set topology.

\begin{lemma}\label{lem:point-set} Let $X_\bullet$ be a levelwise Hausdorff simplicial space. Let 
	\[X_{1,\bullet} \subset X_{2,\bullet} \subset \cdots \subset X_{\bullet}\]
be an $\bN_{>0}$-indexed sequence of simplicial subspaces such that: (i) $X_{s,p} \subset X_p$ is compact for all $s,p$, and (ii) each point $x \in X_p$ has an open neighborhood contained in some $X_{s,p}$. If $C$ is compact, then any continuous map $C \to |X_\bullet|$ factors as $C \to |X_{s,\bullet}| \to |X_\bullet|$ for some $s$.
\end{lemma}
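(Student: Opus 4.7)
The plan is to exhibit $|X_\bullet|$ as a sequential colimit $\mr{colim}_{s \in \bN_{>0}} |X_{s,\bullet}|$ in $\cat{Top}$ along closed inclusions, and then invoke the standard fact that a continuous map from a compact space into such a colimit must factor through some finite stage.

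To set up the colimit, I would first show $X_p = \mr{colim}_s X_{s,p}$ in $\cat{Top}$. By (ii) the subspaces $X_{s,p}$ cover $X_p$, and the topology on $X_p$ is the colimit topology: given $U \subseteq X_p$ with $U \cap X_{s,p}$ relatively open for every $s$ and any $x \in U$, one picks by (ii) an open neighborhood $V$ of $x$ in $X_p$ contained in some $X_{s_0,p}$; then $V$ is open in $X_{s_0,p}$, and $U \cap V = (U \cap X_{s_0,p}) \cap V$ is open in $X_{s_0,p}$, hence in the open subset $V \subseteq X_p$. By (i) together with levelwise Hausdorffness, each $X_{s,p} \hookrightarrow X_{s+1,p}$ is a closed inclusion. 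Since thin geometric realization is a left adjoint and this colimit is computed levelwise, $|X_\bullet| = \mr{colim}_s |X_{s,\bullet}|$ in $\cat{Top}$. The main technical point is then to verify that the induced inclusions $|X_{s,\bullet}| \hookrightarrow |X_{s+1,\bullet}|$ are still closed; this requires a careful analysis of the quotient relation in the thin realization, using that $X_{s,\bullet}$ is a simplicial subspace of $X_{s+1,\bullet}$ (closed under face and degeneracy maps) and that each $X_{s,p}$ is closed in $X_{s+1,p}$.

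With $|X_\bullet|$ presented as a colimit along closed inclusions, the conclusion follows from a standard argument: if $f \colon C \to |X_\bullet|$ did not factor through any $|X_{s,\bullet}|$, one could choose distinct $y_s \in f(C) \setminus |X_{s,\bullet}|$. For any $t$, the intersection $\{y_s\}_s \cap |X_{t,\bullet}|$ is contained in the finite set $\{y_1,\ldots,y_{t-1}\}$, hence finite and closed in $|X_{t,\bullet}|$, so by the colimit topology every subset of $\{y_s\}$ is closed in $|X_\bullet|$. Thus $\{y_s\}$ is an infinite closed-discrete subset of the compact $f(C)$, a contradiction. The main obstacle is the closed-inclusion property addressed in the previous paragraph.
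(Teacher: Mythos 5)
Your high-level plan matches the paper's: realize $|X_\bullet|$ as $\mathrm{colim}_s\,|X_{s,\bullet}|$, show the bonding maps are closed inclusions, then use the standard compactness argument (the paper cites Lemma~3.6 of Strickland; your hands-on argument via a discrete closed subset is a fine substitute, modulo knowing the $|X_{s,\bullet}|$ are $T_1$). Your identification of $X_p$ with $\mathrm{colim}_s X_{s,p}$ is also essentially what the paper does.

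However, you have explicitly left a gap at precisely the step that carries all the difficulty: proving that $|X_{s,\bullet}| \hookrightarrow |X_{s+1,\bullet}|$ is a closed inclusion. You suggest this ``requires a careful analysis of the quotient relation'' using only that $X_{s,p}$ is closed in $X_{s+1,p}$. That is not enough, and it is not what the paper does. For thin geometric realizations of general simplicial spaces, a levelwise closed simplicial subspace need not give a closed subspace of the realization; the quotient by the degeneracy relations does not play well with subspace topologies in general. The paper's proof instead makes essential use of the \emph{compactness} hypothesis (i): it filters each $|X_{s,\bullet}|$ by skeleta, observes that $\mathrm{sk}_p|X_{s,\bullet}|$ is a quotient of the compact space $\bigsqcup_{k\le p}\Delta^k\times X_{s,k}$ hence compact, proves it is Hausdorff by rewriting the skeleton as a realization of a levelwise Hausdorff simplicial space and invoking a nontrivial result (Theorem~1.1 of the paper's reference \cite{pazzis}), and then concludes the maps $\mathrm{sk}_p|X_{s,\bullet}|\to\mathrm{sk}_p|X_{s+1,\bullet}|$ are closed inclusions because continuous injections between compact Hausdorff spaces are. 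Finally it passes to the colimit over $p$ via a pullback-square colimit lemma (Lemma~3.9 of Strickland). None of this is a ``careful analysis of the quotient relation'' in the sense of chasing identifications by hand; it is a compactness-and-Hausdorffness argument, and without invoking compactness your proposed route does not close.

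In short: same strategy, but the one step you defer is the actual content of the lemma, and the ingredient you propose to use for it (closedness) is strictly weaker than what the proof needs (compactness plus a Hausdorffness theorem for realizations).
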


\begin{proof}The strategy is to first identify $|X_\bullet|$ with the sequential colimit $\mr{colim}_s \, |X_{s,\bullet}|$ and then show that this particular sequential colimit commutes with maps out of the compact space $C$.
	
The inclusions $X_{s,p} \to X_p$ induce a continuous bijection $\mr{colim}_s \, X_{s,p} \to X_p$. To show it is a homeomorphism we need to prove it is open: $V \subset \mr{colim}_s \, X_{s,p}$ being open means that all $V \cap X_{s,p}$ are open, and by the hypothesis for all $x \in V$, $V$ contains an open neighborhood of $x$ in $X_p$, which means it is open in $X_p$. Since colimits of simplicial spaces are computed levelwise, $\mr{colim}_s \, X_{s,\bullet} \to X_\bullet$ is an isomorphism of simplicial spaces. Since geometric realization commutes with filtered colimits (it has a right adjoint when working with compactly generated weakly Hausdorff spaces), the canonical map $\mr{colim}_s \, |X_{s,\bullet}| \to |X_\bullet|$ is a homeomorphism.

In CGWH spaces, maps out of a compact space commute with sequential colimits of closed inclusions by Lemma 3.6 of \cite{stricklandcgwh}. Thus we shall verify that each map $|X_{s,\bullet}| \to |X_{s+1,\bullet}|$ is a closed inclusion, using its description as a colimit of the maps of skeleta:
	\[\begin{tikzcd} {\mr{sk}_0 |X_{s,\bullet}|} \dar \rar &  {\mr{sk}_1 |X_{s,\bullet}|} \dar \rar & \cdots \\
{\mr{sk}_0 |X_{s+1,\bullet}|} \rar &  {\mr{sk}_1 |X_{s+1,\bullet}|} \rar & \cdots. \end{tikzcd}\]

We claim all maps in this diagram are closed inclusions. All maps are clearly continuous injections and a continuous injection between compact Hausdorff spaces is always a closed inclusion, so it suffices to prove that each space is compact Hausdorff. They are compact because each $\mr{sk}_p|X_{s,\bullet}|$ is a quotient of the compact space $\bigsqcup_{k \leq p} \Delta^k \times X_{s,k}$. They are Hausdorff because we may freely add degeneracies to write $\mr{sk}_p|X_{s,\bullet}|$ as the geometric realization of a levelwise Hausdorff simplicial space and apply Theorem 1.1 of \cite{pazzis}. Furthermore, from the construction it is clear each square is a pullback square. The result then follows from the following result about CGWH spaces, Lemma 3.9 of \cite{stricklandcgwh}: given a commutative diagram
\[\begin{tikzcd}A_0 \rar \dar{f_0} & A_1 \rar \dar{f_1} & \cdots \\
B_0 \rar & B_1 \rar & \cdots \end{tikzcd}\]
with all maps closed inclusions and all squares pullbacks, the induced map $\mr{colim}_s \, A_s \to \mr{colim}_s \, B_s$ is also a closed inclusion.\end{proof} 

\begin{lemma}
	The map $a_r \colon |B_\bullet(\cE_k (\leq r)_*,\cE_k,(\leq r)^* F(\bI^{k+m})| \to F^{[r]}(\I^k \times \I^m)$ is a microfibration. 
	\label{ismicro}
\end{lemma}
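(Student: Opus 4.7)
The plan is to verify the microfibration property by keeping the operadic composition data fixed and dragging the bottom-level configuration points along the given homotopy $H$. Given a lifting problem with $h\colon D^i \times \{0\} \to |B_\bullet(\cE_k(\leq r)_*,\cE_k,(\leq r)^* F(\bI^{k+m}))|$ and $H\colon D^i \times [0,1] \to F^{[r]}(\bI^k \times \bI^m)$, the lift $\tilde H(-,t)$ is to be constructed by representing $h$ pointwise as $([t_0,\ldots,t_p],\sigma)$, where $\sigma$ is a nested tuple of rectilinear embeddings containing bottom-level ordered configurations $\xi_j$, and replacing each $\xi_j$ by the unique preimage under the composed embedding of the correspondingly dragged subset of $H(-,t)$. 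This is well-defined because the augmentation $a_r$ canonically labels each output point by which bottom-level configuration it comes from and its index in that configuration, so the drag by $H$ descends uniquely to the bottom.

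To ensure the dragged preimages still lie inside their cubes I need a uniform $\epsilon$, which requires compactness. I therefore first reduce to a compact situation using Lemma \ref{lem:point-set}: define a filtration $X_{s,\bullet} \subset B_\bullet$ by declaring a $p$-simplex to lie in $X_{s,p}$ if the total number of bottom-level configuration points is at most $s$, each of the constituent rectilinear embeddings has coordinate-wise dilation factors in $[1/s,s]$ and image contained $1/s$-away from the boundary of its target cube, and the bottom-level configurations are pairwise $1/s$-separated and $1/s$-away from the boundary of $\bI^{k+m}$. Each $X_{s,p}$ is compact as a closed bounded subset of a finite disjoint union of finite-dimensional spaces; the $X_{s,p}$ exhaust $B_p$; and every point has an open neighborhood contained in some $X_{s,p}$ because all defining inequalities are strict. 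Lemma \ref{lem:point-set} then lets me assume $h$ factors through $|X_{s,\bullet}|$ for some $s$.

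On $|X_{s,\bullet}|$ every output point of $a_r$ lies at distance at least some constant $c(s)>0$ from the boundary of the image of its containing cube. By uniform continuity of $H$ on the compact set $D^i \times [0,1]$, I can choose $\epsilon>0$ small enough that $H$ displaces every output point by less than $c(s)/2$ for all $t \in [0,\epsilon]$, so the preimages remain in their cubes and $\tilde H$ takes values in $|B_\bullet|$. Continuity in the $D^i$-parameter and in $t$ follows from continuous dependence of rectilinear embeddings on their parameters, and compatibility with face and degeneracy maps of the bar construction is automatic since the lift only alters bottom-level configurations while preserving all nested $\cE_k$-operations. The main technical obstacle will be verifying the compactness and open-neighborhood conditions for the filtration $X_{s,\bullet}$ simultaneously at every simplicial level $p$, since deeply nested rectilinear embeddings must be controlled uniformly; once the filtration is in place, the rest is direct.
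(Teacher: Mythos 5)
Your overall strategy matches the paper's: keep the nested cubes fixed, drag the bottom configuration points along $H$, and use Lemma~\ref{lem:point-set} to pass to a compact stage where a uniform $\epsilon$ can be extracted. The genuine gap is in the filtration you feed into Lemma~\ref{lem:point-set}. The clause requiring each constituent rectilinear embedding to have image contained $1/s$-away from the boundary of its target cube excludes from every $X_{s,p}$ those $p$-simplices in which some cube image touches $\partial\I^k$. Such simplices do exist: a rectilinear embedding $\I^k\to\I^k$ is allowed to have translation parameter $0$, or dilation plus translation equal to $1$, in any coordinate. Any such simplex lies in no $X_{s,p}$, so hypothesis (ii) of Lemma~\ref{lem:point-set} --- that every point of $X_p$ has an open neighborhood contained in some $X_{s,p}$ --- fails, and the factorization of $h$ through some $|X_{s,\bullet}|$ is not justified.

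The paper avoids this by never constraining the cubes relative to $\partial\I^k$. After first passing (via compactness of $D^i\times[0,1]$) to the subcomplex $X^\delta_\bullet$ of simplices whose \emph{output} configuration satisfies the $\delta$-separation and $\delta$-column bounds that $H$ is known to obey, it filters by sublevel sets of $\rho_p$, the minimum distance from the output configuration points to the boundaries of the cube images. Since $\rho_p>0$ on every simplex, condition (ii) of Lemma~\ref{lem:point-set} holds automatically, while the $\delta$-restriction supplies the compactness needed for condition (i). You could likely repair your proposal by deleting the offending clause: your remaining bounds (dilations $\geq 1/s$ at each of the $p+1$ levels, bottom configurations $1/s$-separated and $1/s$-away from $\partial\I^{k+m}$) compose to yield a positive lower bound on $\rho_p$ for each fixed $p$, and compactness, exhaustion and openness then all hold. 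But the paper's filtration directly by $\rho_p$ is cleaner and gives a bound uniform in $p$.
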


\begin{proof}Fixing a cardinality $n$, we need to prove that the component $a_r(n)$ is a microfibration. Suppose we are given a commutative diagram
	\[\begin{tikzcd} D^i \times \{0\} \dar \rar{h} & {|B_\bullet(\cE_k (\leq r)_*,\cE_k,(\leq r)^* F(\bI^{k+m}))|}(n) \dar{a_r(n)} \\
	 D^i \times [0,1] \rar{H} & F^{[r]}_n(\I^k \times \I^m).\end{tikzcd}\]	 
	 
	Since $D^i \times [0,1]$ is compact, there exists a $\delta>0$ such that $H$ factors over the compact subspace of configurations $\xi$ where
		\begin{enumerate}[\indent (a)]
		\item  the points in $\xi$ have distance $\geq \delta$ from each other,
		\item for all closed cubes $C \subset \bR^k$ with equal sides of length $<\delta$, the set $C \times \I^m$ contains at most $r$ points of $\xi$.
	\end{enumerate}
	
	Let us abbreviate $B_p(\cE_k (\leq r)_*,\cE_k,(\leq r)^* F(\bI^{k+m}))(n)$  by $X_p$, and by $X^\delta_p$ the subspace of $X_p$ of elements whose image under $\alpha_r(n)$ satisfies (a) and (b). 
	
	Let $\rho_p \colon X^\delta_p \to (0,\infty)$ be the minimum of the distances from the points in the image $\xi$ to the boundaries of the images of the cubes. Then $X^\delta_\bullet$ is a simplicial space with a sequence of continuous functions $\rho_p \colon X^\delta_p \to (0,\infty)$ such that $\rho_{p+1} \circ s_i = \rho_p$ and $\rho_{p-1} \circ d_i \geq \rho_p$. For each integer $s \geq 1$, the subspaces $X^\delta_{s,p} \coloneqq \rho_p^{-1}([1/s,\infty)) \subset X^\delta_p$ assemble to a simplicial space $X^\delta_{s,\bullet}$. 
	
	This satisfies the hypotheses of Lemma \ref{lem:point-set} (condition (i) of that lemma is the reason we use $X^\delta_\bullet$ instead of $X_\bullet$, and uses that only the interiors of cubes need to be disjoint, not their closures). 	Hence the map $h$ factors over some stage $|X^\delta_{s,\bullet}|$ with $\delta \geq \frac{1}{s}>0$ such that for all $d \in D^i$, the configuration $H(d,0)$ is given by a $\xi$ which satisfies the properties
	\begin{enumerate}[\indent (a)]
		\item the points in $\xi$ have distance $\geq \frac{1}{s}$ from each other,
		\item for all closed cubes $C \subset \bR^k$ with equal sides of length $<\frac{1}{s}$, the set $C \times \I^m$ contains at most $r$ points of $\xi$,
		\item the points in $\xi$ have distance $\geq \frac{1}{s}$ to the boundaries of the images of the cubes in $h(d)$.
	\end{enumerate}
	
	By continuity of the map $H$, there is an $\epsilon>0$ such that for all $d \in D^i$ and $t \in [0,\epsilon]$, the configuration $H(d,t)$ is within distance $\frac{1}{3s}$ of $H(d,0)$. The partial lift is given by the map which assigns to $(d,t) \in D^i \times [0,\epsilon]$ the element of $|X_\bullet|$ represented by configuration $H(d,t)$ inside the cubes coming from the unique non-degenerate representative of $h(d)$. To see this is well-defined, note that (c) implies a point in $H(d,t)$ remains within the same cubes of $h(d)$ as the corresponding point in $H(d,0)$.
	
	To see it is continuous, note that the movement of the points in the configuration can be described by recording their displacements by an element $\Delta(d,t)$ of $\left([-\frac{1}{3s},\frac{1}{3s}]^{k+m}\right)^n$ (each $\Delta(d,0)$ equals $0$). That is, $\Delta(d,t)$ is defined by $H(d,t) = H(d,0) +\Delta(d,t)$.
	
	There is a simplicial map
	\[\left( [-\frac{1}{3s},\frac{1}{3s}]^{k+m}\right)^n \times X^{1/s}_{s,\bullet} \lra X_\bullet\]
	obtained by applying the displacement to the configuration. This is continuous, and well-defined because whenever we move points in the configuration of an element of $X^{1/s}_{s,p}$ at most $\frac{1}{3s}$ in any of the directions, they do not (a) collide with each other, (b) have more than $r$ points in a subset $\{x\} \times \I^m$, and (c) cross boundaries of cubes. That the lift is continuous then follows by observing that it can be realized as a composition of continuous maps
	\begin{align*}D^i \times [0,\epsilon] &\overset{\Delta \times h}\lra \left( [-\frac{1}{3s},\frac{1}{3s}]^{k+m}\right)^n \times |X^{1/s}_{s,\bullet}| \\
	&\overset{\cong}{\lra} \left| \left( [-\frac{1}{3s},\frac{1}{3s}]^{k+m}\right)^n \times X^{1/s}_{s,\bullet} \right| \\
	&\lra |X_\bullet|.\qedhere\end{align*}
\end{proof}

	We now prove that the fibers of $a_r(n)$ are weakly contractible, so Lemma \ref{lem:microW} implies that $a_r(n)$ is a weak homotopy equivalence.
	
	\begin{lemma} The fibers of the map $a_r \colon |B_\bullet(\cE_k (\leq r)_*,\cE_k,(\leq r)^* F(\bI^{k+m})| \to F^{[r]}(\I^k \times \I^m)$ are weakly contractible.
	\label{iscontractible}
	\end{lemma}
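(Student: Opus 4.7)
The plan is to show that for a fixed $\xi \in F_n^{[r]}(\I^k \times \I^m)$, the fiber $F_\xi \coloneqq a_r^{-1}(\xi)$ is weakly contractible. First I would describe $F_\xi$ as the geometric realization of a simplicial fiber $C^\xi_\bullet$, whose $p$-simplices parametrize $(p+1)$-fold nested rectilinear cube decompositions of the projection $\pi\xi \subset \I^k$ (with $\pi \colon \I^k \times \I^m \to \I^k$), subject to the outermost layer partitioning $\xi$ into groups of size at most $r$. The innermost configurations are uniquely determined by the cube data and the requirement that the composition equals $\xi$, so the genuine degrees of freedom lie entirely in the cube data.

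I would single out a canonical form of ``maximally refined'' $0$-simplex as follows. The $F^{[r]}$-condition says each fiber $\pi^{-1}(x)\cap \xi$ has cardinality at most $r$, so taking $l$ tiny pairwise disjoint cubes around the distinct projection values $x_1,\ldots,x_l$ of $\xi$ yields a valid $0$-simplex with outermost groups of sizes $n_1,\ldots,n_l \leq r$ summing to $n$. I would then interpret $C^\xi_\bullet$ as the nerve of a topological category $\mathcal{D}_\xi$ whose objects are outer decompositions and whose morphisms encode refinements (splitting a cube into a disjoint union of sub-cubes with compatible sub-configurations): a chain of $p$ composable morphisms is precisely a $(p+1)$-layer nested decomposition, giving the identification with $C^\xi_\bullet$.

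To show $|\mathcal{D}_\xi|$ is contractible, I would exhibit the sub-category $\mathcal{M}_\xi \subset \mathcal{D}_\xi$ of maximally refined decompositions as a contractible terminal sub-category: every object admits a canonical refinement to an object of $\mathcal{M}_\xi$, the space of refinements is contractible by affine interpolation in the space of nested cubes, and the space of maximally refined decompositions is itself contractible (being a product of contractible cube-neighborhood spaces around each $x_i$). An alternative, more explicit, route is to construct a contracting homotopy $H \colon F_\xi \times [0,1] \to F_\xi$ that combines a straight-line homotopy in $\cE_k$-space (shrinking and translating outermost cubes toward the canonical position) with insertion of operad-unit degeneracies at the innermost levels of the bar construction.

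The main obstacle is that the space of $0$-simplices of $C^\xi_\bullet$ is a disjoint union over admissible outer partitions of $\xi$, so different partitions are connected only through higher simplices in the nerve; merging or splitting cubes is not a continuous operation in the $0$-simplex space. Verifying that the refinement chains do connect all partition strata into a contractible whole requires careful bookkeeping of the simplicial skeleta, or equivalently, a rigorous implementation of the contractible-terminal-subcategory argument in the continuous (topologically enriched) setting. Either approach will also need the elementary geometric fact that pairwise disjoint little cubes containing a prescribed finite set of points in $\I^k$ always form a contractible space, which lets one perform the affine interpolations on cube data without collisions.
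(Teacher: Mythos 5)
You correctly identify both the overall strategy --- contract the realization of the simplicial space $X_\bullet(\xi)$ of nested cube decompositions lying over a fixed $\xi$ --- and the principal obstacle, namely that the $0$-simplex space breaks into components indexed by admissible outer partitions of $\xi$, so no single continuous formula contracts it. But neither of your proposed fixes closes the gap. The topological-category route requires a continuous analogue of a Quillen Theorem~A-style cofinality argument, and there is no \emph{canonical} refinement of an arbitrary decomposition to a maximally refined one, as you yourself note; that is exactly where the work would be, and the sketch leaves it open. Your second route proposes ``insertion of operad-unit degeneracies at the innermost levels,'' but that is the last \emph{ordinary} degeneracy of the bar construction and produces no contraction; the extra degeneracy one actually needs is not an operad-unit insertion.

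The missing idea is to fix, once and for all, a tuple $e=(e_1,\dots,e_l)\in\cE_k(l)$ of little cubes, one around each distinct point of the projection $\xi'$ of $\xi$ to $\I^k$, and pass to the subsimplicial space $X_\bullet(\xi,e)\subset X_\bullet(\xi)$ of decompositions in which every cube containing a point of $\xi'$ is required to contain the corresponding $e_i$. Both are Reedy cofibrant (Lemma~9.14 of \cite{GKRW1}) and the inclusion is a levelwise homotopy equivalence, so their realizations are weakly equivalent. Once $e$ is fixed, inserting $e$ as a new innermost level gives a well-defined extra degeneracy $X_p(\xi,e)\to X_{p+1}(\xi,e)$ --- well-defined precisely because in $X_\bullet(\xi,e)$ every cube already contains its $e_i$, so $e$ nests inside whatever innermost level is present --- and this exhibits $|X_\bullet(\xi,e)|$ as contractible onto the augmentation point. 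This sidesteps the partition stratification entirely: one never needs a self-map of the $0$-simplex space, and one never invokes a topological cofinality theorem. If you want a categorical reading, $X_\bullet(\xi,e)$ is the nerve of a topological category with terminal object $e$, which is much cleaner than a terminal \emph{subcategory} argument; but the paper's proof does not need that interpretation at all.
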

	\begin{proof}

\begin{figure}
	\begin{tikzpicture}
	\draw (0,0) rectangle (4,4);
	\draw [fill=blue!5!white] (0.25,0) rectangle (3,4);
	\draw [fill=blue!10!white] (0.5,0) rectangle (1.5,4);
	\draw [fill=blue!10!white] (2,0) rectangle (2.75,4);
	\node at (1,.3) {$\bullet$};
	\node at (1,1.5) {$\bullet$};	
	\node at (1,1.9) {$\bullet$};
	\node at (1,3.1) {$\bullet$};	
	\node at (2.5,1.6) {$\bullet$};
	\node at (2.5,2.5) {$\bullet$};
	\draw [dotted] (1,0) -- (1,4);
	\draw [dotted] (2.5,0) -- (2.5,4);
	\node at (2,0) [below] {$\bI^k = \bI$};
	\node at (0,2) [left] {$\bI^m = \bI$};
	\end{tikzpicture}
	\caption{An element of $X_1$ in the case $k=m=1$, $r=4$, and $n=6$. There are two innermost cubes and one outermost cube.}
	\label{fig:microfib}
\end{figure}
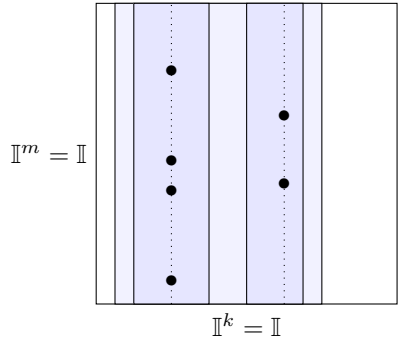

	Fix a configuration $\xi \in \smash{F^{[r]}_n}(\I^k \times \I^m)$. As above, the fiber $\epsilon^{-1}(\xi)$ is given by the geometric realization of the subsimplicial space of $X_\bullet$ with underlying configuration $\xi$. Call this simplicial space $X_\bullet(\xi)$. Another application of Lemma 9.14 of \cite{GKRW1} tells us this is Reedy cofibrant.
	
	Let $\xi' \in \mr{Sym}_n(\I^k) \coloneqq (\I^{k})^n/\fS_n$ be the configuration with multiplicities obtained by projecting $\xi$ onto $\I^k$. The $p$-simplices of $X_\bullet(\xi)$ are given by $p+1$ levels of nested $k$-dimensional cubes such that all points of $\xi'$ are contained in an innermost cube and all cubes except the outermost ones contain at most $r$ points of $\xi'$ counted with multiplicity. Let $(e_1,e_2, \ldots e_l) \in \cE_k(l)$ be a collection of cubes such that every cube $e_i$ contains exactly one point of $\xi'$ (counted without multiplicity) and every point of $\xi'$ (counted without multiplicity) is in one of the cubes $e_i$. Let $X_\bullet(\xi, e)$ denote the subsimplicial space of $X_\bullet(\xi)$ where we require that if a cube contains a point of $\xi'$, then it contains the corresponding $e_i$. This is also Reedy cofibrant. Thus, since the inclusion $X_\bullet(\xi, e) \hookrightarrow X_\bullet(\xi)$ induces a levelwise homotopy equivalence, it induces a weak equivalence on geometric realizations. View $X_\bullet(\xi, e)$ as an augmented simplicial space by adding a point in degree $-1$.  There is an extra degeneracy $X_p(\xi, e) \to X_{p+1}(\xi, e)$ given by inserting $e_i$ in the innermost cubes, and hence $|X_\bullet(\xi, e)|$ is contractible.

\end{proof}

\begin{proof}[Proof of Proposition \ref{prop:alpha-r}] By combining Lemmas \ref{lem:microW}, \ref{ismicro}, and  \ref{iscontractible}, we see that the map $a_r \colon |B_\bullet(\cE_k (\leq r)_*,\cE_k,(\leq r)^* F(\bI^{k+m})| \to F^{[r]}(\I^k \times \I^m)$ is a weak equivalence. Since $a_r$ is a map of symmetric sequences and all of the symmetric group actions are free, it is weak equivalence of symmetric sequences. The result follows because applying a weak equivalence between $\Sigma$-cofibrant symmetric sequences in $\cat{sSet}$ to a cofibrant object is a weak equivalence by Lemma 9.1 of  \cite{GKRW1}, and geometric realization preserves weak equivalences between Reedy cofibrant simplicial objects.	
\end{proof}

The inclusion $\I^k \times F_{r+1}(\I^m) \hookrightarrow F_{r+1}(\I^k \times \I^m)$ given by $(x,\xi) \mapsto x \times \xi$, i.e.\ sending each point $m_i \in \xi$ to $x \times m_i$, has image given by the complement of $\smash{F_{r+1}^{[r]}}(\I^k \times \I^m)$ in $F_{r+1}(\I^k \times \I^m)$. Let $\varphi_{m,r}$ denote the trivial vector bundle over $\I^k \times F_r(\I^m)$ given by $\I^k \times F_r(\I^{m}) \times \bR^{r-1} \to \I^k \times F_r(\I^m)$, with $\fS_r$ acting diagonally and with $\bR^{r-1}$ the orthogonal complement to the trivial representation in the permutation representation with its usual metric. The vector bundle $\varphi_{m,r}$ can be thought of as the $\fS_r$-equivariant analogue of $\phi_{m,r}$ from the introduction.

\begin{lemma} \label{lem:thomspacelevel2}
	The normal bundle of $\I^k \times F_{r+1}(\I^m)$ in $F_{r+1}(\I^k \times \I^m)$ is $\fS_{r+1}$-equivariantly isomorphic to $k\varphi_{m,r+1}$.
\end{lemma}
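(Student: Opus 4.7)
The plan is a direct computation of the normal bundle from the derivative of the embedding. Both the tangent bundle of $F_{r+1}(\I^k \times \I^m)$ and the tangent bundle of $\I^k \times F_{r+1}(\I^m)$ are canonically trivial, so the normal bundle will likewise be canonically trivial; the only real content is identifying the fiber as an $\fS_{r+1}$-representation.

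First I would write down the inclusion explicitly as $i(x,(m_1,\ldots,m_{r+1})) = ((x,m_1),\ldots,(x,m_{r+1}))$ and compute its derivative at a point to be the map
\[di \colon \bR^k \oplus (\bR^m)^{r+1} \lra (\bR^k \oplus \bR^m)^{r+1}, \qquad (v,w_1,\ldots,w_{r+1}) \longmapsto ((v,w_1),\ldots,(v,w_{r+1})).\]
The image of $di$ is $\Delta(\bR^k) \oplus (\bR^m)^{r+1}$, where $\Delta(\bR^k) \subset (\bR^k)^{r+1}$ denotes the diagonal. Using the standard inner product on $\bR^{k+m}$, the normal bundle is the orthogonal complement of this image inside the trivial bundle with fiber $(\bR^k \oplus \bR^m)^{r+1}$. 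Since the $(\bR^m)^{r+1}$-summand is entirely in the image, the normal bundle equals the trivial bundle with fiber $\Delta(\bR^k)^\perp \subset (\bR^k)^{r+1}$.

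Next I would identify this fiber as an $\fS_{r+1}$-representation. Writing $(\bR^k)^{r+1} \cong \bR^k \otimes_{\bR} \bR^{r+1}$ with $\fS_{r+1}$ acting trivially on $\bR^k$ and by permutation on $\bR^{r+1}$, the diagonal $\Delta(\bR^k)$ corresponds to $\bR^k \otimes \bR\cdot(1,1,\ldots,1)$, so its orthogonal complement is $\bR^k \otimes \bR^r$, where $\bR^r$ is the orthogonal complement to the trivial subrepresentation in the permutation representation of $\fS_{r+1}$. This fiber is exactly $k$ copies of $\bR^r$ with the $\fS_{r+1}$-action used to define $\varphi_{m,r+1}$, giving the desired $\fS_{r+1}$-equivariant isomorphism with $k\varphi_{m,r+1}$.

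The computation is essentially routine; the main thing to be careful about is that the $\fS_{r+1}$-action permutes the $r+1$ points (hence acts on the second tensor factor $\bR^{r+1}$) while acting trivially on the $\bR^k$ coordinates, so the $k$ copies of the standard representation $\bR^r$ recombine correctly under the Whitney sum convention used in the definition of $k\varphi_{m,r+1}$.
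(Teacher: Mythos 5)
Your proof is correct and takes essentially the same approach as the paper's: identify the tangent spaces as trivial bundles, observe that the derivative of the inclusion is the diagonal on the $\bR^k$ factor and the identity on the $(\bR^m)^{r+1}$ factor, and then identify the orthogonal complement of the diagonal $\bR^k \subset (\bR^k)^{r+1}$ as $k$ copies of the standard representation of $\fS_{r+1}$. You spell out the derivative computation a bit more explicitly, but the argument is the same.
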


\begin{proof}The normal bundle to $\I^k \times F_{r+1}(\I^m)$ is the orthogonal complement in $T(F_{r+1}(\I^k \times \I^m))$ to the tangent bundle $T(\I^k \times F_{r+1}(\I^m))$. The former is the restriction of $T(\I^k \times \I^m)^{\oplus r+1} \cong (T\bI^k)^{\oplus r+1} \oplus (T\I^m)^{\oplus r+1}$, and the latter is the restriction of $T\bI^k \oplus (T\I^m)^{\oplus r+1}$. The inclusion is the diagonal on the first term and the identity on the second, and equivariant for the $\fS_{r+1}$-action. Thus the normal bundle is $\fS_{r+1}$-equivariantly isomorphic to the restriction of the orthogonal complement of the diagonal $T\bI^k \subset (T\bI^k)^{\oplus r+1}$. This is isomorphic to a $k$-fold Whitney sum of the trivial $\bR^{r}$-bundle, with $\fS_{r+1}$-action given by the standard representation.\end{proof}

The vector bundle $\varphi_{m,r+1}$ inherits a Riemannian metric, and we let $S(k\varphi_{m,r+1})$ be the sphere bundle with fiber over $(x,\xi) \in \I^k \times F_{r+1}(\I^{m})$ those vectors of length $\frac{1}{2}d(y,\partial I^k)$. This bounds a disk bundle $D(k\varphi_{m,r+1})$, and both are clearly isomorphic to the unit sphere and disk bundles. Using the exponential map, we obtain the horizontal maps in the following commutative diagram of $\fS_{r+1}$-spaces:
\[\begin{tikzcd} S(k\varphi_{m,r+1}) \rar \dar & F^{[r]}_{r+1}(\I^k \times \I^m) \dar \\
D(k\varphi_{m,r+1}) \rar & F^{[r+1]}_{r+1}(\I^k \times \I^m) = F_{r+1}(\I^k \times \I^m). \end{tikzcd}\]

\begin{proposition}\label{prop:pushout-free}For all $r \geq 0$, there is a zigzag of homotopy cocartesian squares
	\[\begin{tikzcd} \bfE_k ((r+1)_* S(k \varphi_{m,r+1}) \otimes_{\fS_{r+1}} X^{\otimes r+1})  \dar \rar &[-6pt] \dar \cdots &[-6pt] \lar{\simeq} \rar \dar \cdots &[-6pt] \bfT^\bL_{r}(\cU \bfE_{k+m}(X)) \dar \\
	\bfE_k ((r+1)_*  D(k \varphi_{m,r+1}) \otimes_{\fS_{r+1}} X^{\otimes r+1}) \rar & \cdots & \lar{\simeq} \cdots \rar & \bfT^\bL_{r+1}(\cU \bfE_{k+m}(X)). \end{tikzcd}\]
\end{proposition}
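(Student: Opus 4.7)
The plan is to exhibit the map $\bfT^\bL_{r}(\cU \bfE_{k+m}(X)) \to \bfT^\bL_{r+1}(\cU \bfE_{k+m}(X))$ as a pushout along $\bfE_k((r+1)_* S(k\varphi_{m,r+1}) \otimes_{\fS_{r+1}} X^{\otimes r+1}) \to \bfE_k((r+1)_* D(k\varphi_{m,r+1}) \otimes_{\fS_{r+1}} X^{\otimes r+1})$ by stringing together three homotopy cocartesian squares: (i) one coming from an equivariant tubular neighborhood of $\bI^k \times F_{r+1}(\bI^m)$ in $F_{r+1}(\bI^k \times \bI^m)$; (ii) one built from the weak equivalences $\alpha_r$, $\alpha_{r+1}$ of Proposition~\ref{prop:alpha-r}; and (iii) the square of Proposition~\ref{prop:attach-one-step} applied to the reduced $\cE_k$-algebra $\cU\bfE_{k+m}(X)$.

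For (i), one first observes that the complement of the $\fS_{r+1}$-equivariant inclusion $\bI^k \times F_{r+1}(\bI^m) \hookrightarrow F_{r+1}(\bI^k \times \bI^m)$ is precisely $F^{[r]}_{r+1}(\bI^k \times \bI^m)$, since an ordered configuration of $r+1$ points in $\bI^k \times \bI^m$ fails to lie in $F^{[r]}_{r+1}$ iff all $r+1$ points occupy a single column $\{x\} \times \bI^m$. Using Lemma~\ref{lem:thomspacelevel2} to identify the normal bundle with $k\varphi_{m,r+1}$, a $\fS_{r+1}$-equivariant tubular neighborhood via the exponential map produces a homotopy cocartesian square of $\fS_{r+1}$-spaces with bottom-right corner $F_{r+1}(\bI^k \times \bI^m)$ and whose top and left sides are the inclusions of $S(k\varphi_{m,r+1})$ into $F^{[r]}_{r+1}(\bI^k \times \bI^m)$ and $D(k\varphi_{m,r+1})$ respectively. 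Applying the left Quillen functor $\bfE_k \circ (r+1)_* \circ ((-)\otimes_{\fS_{r+1}} X^{\otimes r+1})$ yields square (i); using that $X$ is concentrated in rank $1$, its right-hand column rewrites as $\bfE_k((r+1)_*(r+1)^*\cU \bfF^{[r]}(\bI^k \times \bI^m)(X)) \to \bfE_k((r+1)_*(r+1)^*\cU \bfF^{[r+1]}(\bI^k \times \bI^m)(X))$.

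For (ii), applying $(r+1)^*\cU$ to the weak equivalences $\alpha_r$ and $\alpha_{r+1}$, and then $\bfE_k \circ (r+1)_*$, gives a commutative square of weak equivalences identifying the above column with $\bfE_k((r+1)_*(r+1)^*\cU \bfT^\bL_r(\cU \bfE_{k+m}(X))) \to \bfE_k((r+1)_*(r+1)^*\cU \bfT^\bL_{r+1}(\cU \bfE_{k+m}(X)))$; this square of weak equivalences is trivially homotopy cocartesian. Then (iii) is exactly Proposition~\ref{prop:attach-one-step} for $\bfA = \cU\bfE_{k+m}(X)$, which is reduced since $\cE_{k+m}$ is non-unitary, and whose right column is $\bfT^\bL_r(\cU\bfE_{k+m}(X)) \to \bfT^\bL_{r+1}(\cU\bfE_{k+m}(X))$.

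The main technical point will be justifying the tubular neighborhood pushout $\fS_{r+1}$-equivariantly. Since $\fS_{r+1}$ acts freely on $F_{r+1}(\bI^k \times \bI^m)$ and $\bI^k \times F_{r+1}(\bI^m)$ is a smooth closed equivariant submanifold with the normal bundle already identified by Lemma~\ref{lem:thomspacelevel2}, an equivariant exponential map provides the needed tubular neighborhood; the resulting square is a strict pushout and is homotopy cocartesian because $S(k\varphi_{m,r+1}) \hookrightarrow D(k\varphi_{m,r+1})$ is an $\fS_{r+1}$-equivariant cofibration between cofibrant objects. Once this is established, the cofibrancy checks needed for $\bfE_k \circ (r+1)_* \circ ((-)\otimes_{\fS_{r+1}} X^{\otimes r+1})$ to preserve the homotopy pushout are routine given the freeness of the $\fS_{r+1}$-action and the cofibrancy of $X$.
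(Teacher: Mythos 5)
Your proposal is correct and follows essentially the same route as the paper: decompose into three squares, the leftmost from the equivariant tubular neighborhood of $\bI^k \times F_{r+1}(\bI^m) \hookrightarrow F_{r+1}(\bI^k \times \bI^m)$ (identified with $k\varphi_{m,r+1}$ via Lemma \ref{lem:thomspacelevel2}) followed by the left Quillen functor $\bfE_k\circ (r+1)_* \circ (-\otimes_{\fS_{r+1}} X^{\otimes r+1})$, the middle from the naturality square of $\alpha_r$ and $\alpha_{r+1}$, and the rightmost from Proposition \ref{prop:attach-one-step} applied to the reduced algebra $\cU\bfE_{k+m}(X)$.

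One small inaccuracy in your justification of step (i): the square with top-right corner $F^{[r]}_{r+1}(\bI^k\times\bI^m)$ is \emph{not} a strict pushout, since $F^{[r]}_{r+1}$ meets the interior of the embedded disk bundle (it is the complement of the zero section, not of the open disk bundle). The strict pushout along the cofibration $S(k\varphi_{m,r+1}) \hookrightarrow D(k\varphi_{m,r+1})$ has top-right corner $F_{r+1}(\bI^k\times\bI^m)\setminus \mathring{D}(k\varphi_{m,r+1})$, and one then uses the radial deformation retraction of $F^{[r]}_{r+1}$ onto this complement (an $\fS_{r+1}$-equivariant homotopy equivalence) to replace one square by the other. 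The conclusion that the square in question is homotopy cocartesian is correct, so this is a gap in the stated reason rather than in the result.
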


\begin{proof}The above result implies that for cofibrant $X$, we have a homotopy cocartesian square
	\[\begin{tikzcd} S(k\varphi_{m,r+1}) \otimes_{\fS_{r+1}} X^{\otimes r+1} \rar \dar &  \dar (r+1)^* \cU^{\cE_k} \bfF^{[r]}(\I^k \times \I^m)(X) \\
	D(k\varphi_{m,r+1})\otimes_{\fS_{r+1}} X^{\otimes r+1}	 \rar & (r+1)^* \cU^{\cE_k} \bfF^{[r+1]}(\I^k \times \I^m)(X).\end{tikzcd}\]
By Proposition \ref{prop:alpha-r}, we have a commutative diagram with horizontal maps weak equivalences
\[\begin{tikzcd} (r+1)^* \cU^{\cE_k}  \bfF^{[r]}(\I^k \times \I^m)(X)   \dar & (r+1)^* \cU^{\cE_k}  \bfT^\bL_r(\cU\bfE_{k+m}(X)) \dar \lar{\alpha_r}[swap]{\simeq} \\
(r+1)^* \cU^{\cE_k}  \bfF^{[r+1]}(\I^k \times \I^m)(X) &  (r+1)^* \cU^{\cE_k}  \bfT^\bL_{r+1}(\cU \bfE_{k+m}(X)) \lar{\alpha_{r+1}}[swap]{\simeq}.\end{tikzcd}\]

Since applying $\bfE_k$ and $(r+1)_*$ preserves homotopy cocartesian squares, doing so gives us the left and middle squares. For the right square, specialize Proposition \ref{prop:attach-one-step} to $\cO = \cE_k$ and $\bfA = \cU \bfE_{k+m}(X)$ to obtain a homotopy cocartesian square	\[\begin{tikzcd}\bfE_k((r+1)_* (r+1)^* \cU^{\cE_k} \bfT^\bL_r(\cU \bfE_{k+m}(X))) \dar \rar &   \bfT^\bL_r(\cU \bfE_{k+m}(X)) \dar \\
\bfE_k((r+1)_* (r+1)^* \cU^{\cE_{k}} \bfT^\bL_{r+1}(\cU \bfE_{k+m}(X))) \rar & \bfT^\bL_{r+1}(\cU \bfE_{k+m}(X)). \end{tikzcd}\qedhere \]
\end{proof}

We now deduce Theorem \ref{thm:mainhighercell} from this by taking $\cat{S} = \cat{Top}$ and $X = (1)^*(\ast)$; we need to resolve the issue that Proposition \ref{prop:pushout-free} only provides zigzags.

\begin{proof}[Proof of Theorem \ref{thm:mainhighercell}]

We start with an elementary homotopy-theoretic observation. Given a commutative diagram of topological spaces 
\[\begin{tikzcd} S(k\phi_{m,r+1}) \dar \rar & X \dar & \lar{\simeq} X' \dar \\
D(k\phi_{m,r+1}) \rar & Y & \lar{\simeq} Y' \end{tikzcd}\]
with decorated arrows weak equivalences, we can find maps $S(k\phi_{m,r+1}) \to X'$ and $D(k\phi_{m,r+1}) \to Y'$ such that in the following diagram
\[\begin{tikzcd} S(k\phi_{m,r+1}) \dar \rar \arrow[bend left=30]{rr} & X \dar & \lar{\simeq} X' \dar \\
D(k\phi_{m,r+1}) \rar \arrow[bend right=30]{rr} & Y & \lar{\simeq} Y' \end{tikzcd}\]
the outer square commutes and the triangles commute up to homotopy. To prove this, first homotope $S(k\phi_{m,r+1}) \to X$ until a lift exists (which is possible since the domain has the homotopy type of a CW-complex). Because $S(k\phi_{m,r+1}) \hookrightarrow D(k\phi_{m,r+1})$ admits the structure of a NDR-pair, we may extend this to a homotopy of commutative diagrams. At this point it suffices to find a lift in the commutative diagram
\[\begin{tikzcd}S(k\phi_{m,r+1}) \dar \rar & Y' \dar \\
D(k\phi_{m,r+1}) \rar & Y,\end{tikzcd}\]
which exists as $(D(k\phi_{m,r+1}),S(k\phi_{m,r+1}))$ is homotopy equivalent to a CW pair.

\vspace{.5em}

Given this observation, we prove by induction over $r$ that we may construct $\bfA_r \simeq \bfT^\bL_{r+1}(\cU \bfE_{k+m}(\ast))$ by iterated pushouts along free algebras, obtaining in the process maps between the $\bfA_r$ satisfying $\mr{colim}_r \, \bfA_r = \mr{hocolim}_r \, \bfA_r \simeq \bfF^{\cE{k+m}}(\ast)$. 

The initial case is $\bfA_{-1} = \varnothing$. For the induction step, let us assume we have produced $\bfA_r$ as in the statement of Theorem \ref{thm:mainhighercell} with a weak equivalence $\beta_r \colon \bfA_r \to \bfT^\bL_r(\cU \bfE_{k+m}(\ast))$. Using the observation in the diagram of Proposition \ref{prop:pushout-free}, we may assume we have a homotopy cocartesian commutative diagram
	\[\begin{tikzcd} S(k\phi_{m,r+1}) \dar \rar & (r+1)^* \cU^{\cE_k}  \bfT^\bL_r(\cU\bfE_{k+m}(\ast)) \dar \\
D(k\phi_{m,r+1}) \arrow{r} & (r+1)^* \cU^{\cE_k}  \bfT^\bL_{r+1}(\cU \bfE_{k+m}(\ast)).\end{tikzcd}\]

Applying the observation again to lift along $\alpha_r$,  we may assume we have a homotopy cocartesian commutative diagram
	\[\begin{tikzcd} S(k\phi_{m,r+1}) \dar \rar  & (r+1)^* \cU^{\cE_k} \bfA_r \rar{(r+1)^* \cU^{\cE_k} \beta_r}  &[20pt] (r+1)^* \cU^{\cE_k}  \bfT^\bL_r(\cU\bfE_{k+m}(\ast)) \dar \\
D(k\phi_{m,r+1}) \arrow{rr} & & (r+1)^* \cU^{\cE_k}  \bfT^\bL_{r+1}(\cU \bfE_{k+m}(\ast)). \end{tikzcd}\]
We take adjoints and define $\bfA_{r+1}$ as the pushout fitting in a commutative diagram
	\[\begin{tikzcd} \bfE_k((r+1)_* S(k\phi_{m,r+1})) \rar \dar & \bfA_r \rar{\beta_r} \dar & \bfT^\bL_r(\cU\bfE_{k+m}(\ast)) \dar \\
\bfE_k((r+1)_* D(k\phi_{m,r+1})) \rar & \bfA_{r+1} \rar{\beta_{r+1}} &  \bfT^\bL_{r+1}(\cU \bfE_{k+m}(\ast)). \end{tikzcd}\]
The outer and left squares are homotopy cocartesian, the former by construction and the latter as a homotopy pushout. Thus the right square is also homotopy cocartesian, and hence the map $\beta_{r+1} \colon \bfA_{r+1} \to \bfT^\bL_{r+1}(\cU\bfF^{\cE_{k+m}}(\ast))$ is a weak equivalence.
\end{proof}

Combining the last sentence of this proof with Proposition \ref{prop:alpha-r}, we get the description of $\bfA_r$ announced in the introduction.

\begin{corollary}\label{cor:a-desc} There are weak equivalences of $\cE_k$-algebras
	\[\bfA_r \overset{\beta_r}\lra \bfT^\bL_r(\cU\bfE_{k+m}(\ast)) \overset{\alpha_r}\lra \bfF^{[r]}(\I^k \times \I^m)(\ast).\]
\end{corollary}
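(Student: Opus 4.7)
The plan is to simply assemble the two weak equivalences whose existence has already been established earlier in the paper. First, I would specialize Proposition \ref{prop:alpha-r} to the case $\cat{S} = \cat{Top}$ and $X = (1)^*(\ast) \in \cat{S}^\bN$, which is cofibrant and concentrated in rank $1$ as required by the Assumption preceding that proposition. This immediately yields a weak equivalence of $\cE_k$-algebras
\[ \alpha_r \colon \bfT^\bL_r(\cU\bfE_{k+m}(\ast)) \lra \bfF^{[r]}(\I^k \times \I^m)(\ast).\]

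Next I would invoke the inductive construction carried out in the proof of Theorem \ref{thm:mainhighercell}. That proof produces, along with the algebras $\bfA_r$ and the maps $\bfA_r \to \bfA_{r+1}$ appearing in the statement of Theorem \ref{thm:mainhighercell}, a compatible family of $\cE_k$-algebra maps $\beta_r \colon \bfA_r \to \bfT^\bL_r(\cU\bfE_{k+m}(\ast))$, and the induction step verifies that each $\beta_r$ is a weak equivalence. This is exactly the content of the parenthetical observation that ``the map $\beta_{r+1}$ is a weak equivalence'' at the end of the proof of Theorem \ref{thm:mainhighercell}.

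Composing $\beta_r$ with $\alpha_r$ then yields the asserted zigzag (both maps pointing to the right). There is essentially no obstacle: the two factors already live in $\cat{Alg}_{\cE_k}(\cat{Top})$ by construction, so the composite is automatically an $\cE_k$-algebra weak equivalence. Thus the proof reduces to a one-sentence reminder that $\alpha_r$ was constructed in Proposition \ref{prop:alpha-r} and $\beta_r$ was constructed during the proof of Theorem \ref{thm:mainhighercell}, and that both are weak equivalences of $\cE_k$-algebras.
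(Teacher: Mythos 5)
Your proposal is correct and follows essentially the same approach as the paper, which likewise obtains the corollary by combining the weak equivalence $\alpha_r$ from Proposition \ref{prop:alpha-r} with the weak equivalence $\beta_r$ produced in the inductive construction within the proof of Theorem \ref{thm:mainhighercell}. One small caveat in wording: the statement exhibits a pair of weak equivalences pointing in the same direction rather than a genuine zigzag, and you are not composing them but displaying both; your reasoning nonetheless correctly identifies both maps and why each is a weak equivalence.
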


\begin{remark}It is plausible that Theorem \ref{thm:mainhighercell} may be deduced from results analogous to those in \cite{GKRW1}. One would need an CW approximation theorem for $\cE_k$-algebras in $\cat{Top}^\bN$, and verify that one may desuspend the identification of $\Sigma^k Q^{\cE_k}_\bL(\bfF^{\cE_{k+m}}(\ast))$ with the $k$-fold bar construction with respect to the canonical augmentation.
\end{remark}

\section{Relation to the May-Milgram filtration}\label{sec:may-milgram}

We now explain the relationship between the results in the previous section and the May-Milgram filtration on $\Omega^m \Sigma^m S^{k}$.

\begin{definition} \label{defConfig}
Given a based topological space $(X,x_0)$, let $C(M;X)$ be the quotient of $\bigsqcup_{n \geq 0} F_n(M) \times_{\fS_n} X^n$ by the relation that $(m_1, \ldots m_n;x_1,\ldots x_n)$ is equivalent to $(m_1, \ldots m_{n-1};x_1,\ldots x_{n-1})$ if $x_n=x_0$. We call this the \emph{configuration space of unordered points in $M$ with labels in $X$}.
\end{definition}

When $X=S^0$, we recover ordinary unordered configuration spaces and drop $X$ from the notation. Work of Milgram and May implies that $\Omega^m \Sigma^m X$ has the weak homotopy type of $C(\I^m;X)$ when $X$ is connected \cite{Mil2,M}. The $r$th stage $\cM_r (C(\I^m;X))$ of the May-Milgram filtration of  $C(\I^m;X) \simeq \Omega^m \Sigma^m X$ is defined to be the image of $F_r(\I^m) \times_{\fS_i} X^i$ in  $C(\I^m;X)$.

In this paper, we use only the case $X=S^k$. In that case, $\Omega^m \Sigma^m S^{k}$ is weakly equivalent to the $k$-fold delooping of $C(\I^k \times \I^m) = \bfF(\I^k \times \I^m)(\ast) \simeq \bfE_{k+m}(\ast)$. Let us denote the $\cE_k$-algebra $\bfF^{[r]}(\I^k \times \I^m)(\ast)$ by $C^{[r]}(\I^k \times \I^m)$. 

\begin{theorem} \label{mainfilt}
	The $k$-fold delooping of $C^{[r]}(\I^k \times \I^m)$ is homotopy equivalent to the $r$th stage in the May-Milgram filtration of $\Omega^m \Sigma^m S^{k}$. 
\end{theorem}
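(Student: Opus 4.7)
My plan is to compare the filtrations by matching their cell structures, using the configuration space model $\bfA_r \simeq C^{[r]}(\bI^k \times \bI^m)$ from Corollary \ref{cor:a-desc} and the $\cE_k$-cell attachment from Theorem \ref{thm:mainhighercell}, and arguing by induction on $r$. The colimit $\mr{colim}_r \, \bfA_r \simeq \bfE_{k+m}(\ast)$ has $k$-fold delooping $\Omega^m \Sigma^m S^k \simeq C(\bI^m; S^k)$, so we obtain an increasing filtration $B^k \bfA_0 \to B^k \bfA_1 \to \cdots$ of $C(\bI^m; S^k)$ whose successive cofibers I would compute and compare with the May-Milgram cofibers.

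For the comparison of cofibers: by Remark \ref{deloopingCells}, applying $B^k$ to the defining $\cE_k$-cell attachment from Theorem \ref{thm:mainhighercell} yields a homotopy pushout of spaces whose cofiber is the Thom space $\Sigma^k \mathrm{Th}(k\phi_{m,r+1})$. On the May-Milgram side, the cofiber $\cM_{r+1}/\cM_r$ is classically computed as $F_{r+1}(\bI^m)_+ \wedge_{\fS_{r+1}} (S^k)^{\wedge (r+1)}$, and the splitting $\bR^{r+1} \cong \bR \oplus \bR^r$ of the permutation representation of $\fS_{r+1}$ (trivial plus standard) identifies this with the same Thom space $\Sigma^k \mathrm{Th}(k\phi_{m,r+1})$.

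For the comparison map itself, I would use a scanning-type $\cE_k$-algebra map $C(\bI^k \times \bI^m) \to \Omega^k C(\bI^m; S^k)$ whose adjoint implements the delooping equivalence $B^k \bfE_{k+m}(\ast) \simeq C(\bI^m; S^k)$. The main geometric input is that the condition ``at most $r$ points per $\bI^k$-fiber'' defining $C^{[r]}$ is compatible with the condition ``at most $r$ non-basepoint labels'' defining $\cM_r$, so scanning restricts to a map $C^{[r]}(\bI^k \times \bI^m) \to \Omega^k \cM_r C(\bI^m; S^k)$. Combining this with induction on $r$ and a five-lemma argument using the cofiber identification above establishes that $B^k C^{[r]}(\bI^k \times \bI^m) \to \cM_r C(\bI^m; S^k)$ is a weak equivalence at each stage, with the base case $r=0$ (both sides a point) being immediate.

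The hard part will be the careful geometric construction of the scanning map and verification that it compatibly identifies the cofibers. In particular, one must check that the attaching map for the new cell of $\cM_{r+1}$, which comes from the tubular neighborhood of $\bI^k \times F_{r+1}(\bI^m)$ inside $F_{r+1}(\bI^k \times \bI^m)$ as in Lemma \ref{lem:thomspacelevel2}, agrees with the attaching map of the $\cE_k$-cell from Theorem \ref{thm:mainhighercell} after $k$-fold delooping.
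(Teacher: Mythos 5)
Your proposal takes a genuinely different route from the paper. The paper never inducts on $r$ or compares cofibers via a five-lemma. Instead it introduces an intermediate model: it shows $\cM_r(C(\I^m;S^k))$ is homotopy equivalent to $C^{[r]}((\bR^k,\bR^k\setminus\I^k)\times\I^m)$ (Lemma \ref{filtrations}, via an explicit compression homotopy $H$ coming from a function $\eta$ measuring separation of configuration points), and then shows $B^k C^{[r]}(\I^k\times\I^m)\simeq C^{[r]}((\bR^k,\bR^k\setminus\I^k)\times\I^m)$ directly by constructing a $k$-fold semi-simplicial resolution $X_{\bullet,\ldots,\bullet}$ of the right-hand side, and proving both legs of a zig-zag are weak equivalences via a microfibration-with-contractible-fibers argument and a levelwise deformation retraction (Proposition \ref{delooponce}). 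The whole point of passing through B\"odigheimer's model $C((\bR^k,\bR^k\setminus\I^k)\times\I^m)$ is that its filtration is defined by exactly the same ``at most $r$ per $\I^k$-fiber'' condition as $C^{[r]}(\I^k\times\I^m)$, so one never has to compare attaching maps at all.

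Your approach, if completed, would be an interesting alternative, but there are two gaps that you acknowledge without resolving and that are serious enough to say the proof is incomplete as written. First, the claim that ``scanning restricts to a map $C^{[r]}(\bI^k\times\bI^m)\to\Omega^k\cM_r C(\bI^m;S^k)$'' is not obviously true: scanning records all configuration points in a thickened box $B_\epsilon(x)\times\I^m$, not just the single fiber $\{x\}\times\I^m$, and a configuration can be in $C^{[r]}$ while having far more than $r$ points inside such a box for any fixed $\epsilon$. Making this work requires a compression argument (close in spirit to the paper's Lemma \ref{filtrations}) to first push configurations into a regime where the box genuinely contains at most $r$ points; this is not a minor detail. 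Second, and more fundamentally, identifying the two cofibers as the same Thom space $\Sigma^k\mathrm{Th}(k\phi_{m,r+1})$ is not sufficient for the five-lemma step: you need the comparison map to induce \emph{that} identification, i.e., you need compatibility of the attaching maps, and you flag this as ``the hard part'' without giving an argument. The paper's route sidesteps exactly this by constructing a direct zig-zag of weak equivalences of $\cE_m$-algebras, never passing through a cofiber comparison. A smaller point: the scanning map $C(\bI^k\times\bI^m)\to\Omega^k C(\bI^m;S^k)$ is only a group completion (the source is not grouplike), so the statement that it is an $\cE_k$-map whose adjoint ``implements the delooping equivalence'' needs to be unpacked, and the corresponding statement for the non-grouplike filtration pieces $C^{[r]}$ requires additional care.
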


To prove this, we need to consider a generalization of $C^{[r]}(\I^k \times \I^m)$ where points can vanish if they enter certain regions. 

\begin{definition}
	Let $M$ be a manifold and $N \subset M$ a subspace. Let $C^{[r]}(M \times \I^m)$ denote the subspace of $C(M \times \I^m)$ of configurations $\xi$ where $\xi \cap \left( \{x\} \times \I^m \right)$ has cardinality $\leq r$ for all $x \in M$. Let $C^{[r]}( (M,N) \times \I^m)$ be the quotient of $C^{[r]}(M \times \I^m)$ by the equivalence relation that $\xi \sim \xi'$ if $\xi \cap(  (M \setminus N) \times \I^m) =\xi' \cap ( ( (M \setminus N) \times \I^m)$.
\end{definition}

We drop the superscript for $r=\infty$ and drop the $- \times \I^m$ for $m=0$. There are two configuration space models for  $\Omega^m \Sigma^m S^{k} = \Omega^m S^{k+m}$. The first is a special case of May's approximation theorem from \cite{M}, building on the work of Milgram in \cite{Mil2}, and the second is a specialization of Proposition 2 of \cite{B}.

\begin{theorem}[May] \label{mayscan}
	For $k>0$, $C(\I^m;S^k)$ is weakly homotopy equivalent to $\Omega^m \Sigma^m S^{k}$.
\end{theorem}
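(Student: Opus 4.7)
This is a special case of May's approximation theorem from \cite{M}, and I would follow its basic strategy. The plan is to construct a natural $\cE_m$-algebra map $\alpha_m \colon C(\I^m; X) \to \Omega^m \Sigma^m X$ for pointed CW $X$, and show it is a weak equivalence when $X$ is connected; then specialize to $X = S^k$ with $k > 0$.

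For the construction of $\alpha_m$: given a configuration $(c_1,\ldots,c_n; x_1,\ldots,x_n) \in F_n(\I^m) \times_{\fS_n} X^n$, pick small disjoint rectilinear cube-embeddings $\phi_i \colon \I^m \hookrightarrow \I^m$ centered at the $c_i$ (the space of such choices is contractible, so this can be made continuous after making a choice of scan radius). Define a map $S^m = \I^m/\partial\I^m \to \Sigma^m X = (\I^m \wedge X)/\partial$ which on $\phi_i(\I^m)$ sends $y \mapsto [\phi_i^{-1}(y), x_i]$ and takes the basepoint value elsewhere. A point labeled by the basepoint of $X$ contributes nothing, so this construction descends to the quotient defining $C(\I^m;X)$. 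Equivalently, $\alpha_m$ is the adjoint of the obvious scanning map, or the composition of the equivalence $C(\I^m;X) \simeq \bfE_m(X)$ (a labeled variant of the free-algebra-on-a-point argument used in Section~3) with the natural map to $\Omega^m \Sigma^m X$ coming from the $\cE_m$-algebra structure on the latter.

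To show $\alpha_m$ is a weak equivalence for connected $X$, proceed by induction on $m$. The base case $m=1$ is the James approximation: $C(\I; X)$ is homotopy equivalent to the James construction $J(X)$, which for connected $X$ is weakly equivalent to $\Omega\Sigma X$; for $X = S^k$ this matches the $\cE_1$-cell structure of Theorem \ref{thm:maincell} after a single delooping. For the inductive step, project onto the last coordinate to obtain
\[ \pi \colon C(\I^m; X) \lra C(\I; C(\I^{m-1}; X)), \]
where the target is built with labels in the pointed space $C(\I^{m-1};X)$ (basepoint the empty configuration). One verifies that $\pi$ is a quasifibration with fiber $C(\I^{m-1}; X)$, and the induction hypothesis together with the $m=1$ case identifies fiber and base with $\Omega^{m-1}\Sigma^{m-1} X$ and $\Omega \Sigma \Omega^{m-1} \Sigma^{m-1} X$ respectively, yielding the required equivalence by the five lemma applied to the long exact sequences.

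The main obstacle is the verification that $\pi$ is a quasifibration. This is the heart of the May--Milgram argument and requires a delicate open-cover/deletion analysis showing that fibers over nearby slabs have consistent homotopy type; crucially, it is exactly here that connectivity of $X$ is used, as it permits small boundary configurations to be absorbed into the basepoint without obstruction. Granting this (Proposition~3.8 and Theorem~2.7 of \cite{M}, or the corresponding statements in \cite{Mil2}), specializing to $X = S^k$ with $k>0$ produces the stated weak equivalence $C(\I^m; S^k) \simeq \Omega^m \Sigma^m S^k$.
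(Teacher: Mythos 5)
The paper does not prove this statement: Theorem \ref{mayscan} is cited directly from May \cite{M} and Milgram \cite{Mil2} with no argument given, so there is no in-paper proof to compare against. Your sketch is thus being judged on its own merits as a reconstruction of the approximation theorem.

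The broad outline (scanning map, James construction for $m=1$, induction on $m$ via a quasifibration) is the right circle of ideas, but the specific inductive step you propose has a genuine gap. The map
\[
\pi \colon C(\I^m;X) \lra C\bigl(\I;\,C(\I^{m-1};X)\bigr)
\]
obtained by projecting configuration points onto the last coordinate is not continuous. Consider two labeled points $(a_1,b_1;x_1)$ and $(a_2,b_2;x_2)$ in $\I^{m-1}\times\I$ with $a_1\neq a_2$, $x_1,x_2$ away from the basepoint, and $b_1\to b_2$. This family has a perfectly good limit in $C(\I^m;X)$, but its image under $\pi$ consists of two distinct points of $\I$ carrying nontrivial (non-basepoint) labels in $C(\I^{m-1};X)$, and in the labeled configuration space $C(\I;Y)$ such points cannot merge: the only identification is deletion of basepoint-labeled points. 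So the image family has no limit in the target, and $\pi$ fails to be continuous; a fortiori it cannot be a quasifibration.

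The arguments actually used in the literature avoid this. May's proof in \cite{M} is operadic: $C(\I^m;-)$ and $\Omega^m\Sigma^m$ are monads, the approximation map is a morphism of monads, and one compares two-sided bar constructions, with the quasifibration input being Theorem 13.1 of \cite{M} rather than a coordinate projection. Segal's and B\"odigheimer's proofs work instead with section-space models and a Mayer--Vietoris induction over opens, where the quasifibration one actually verifies comes from restricting configurations to a half-space with a vanishing collar. If you want an induction along one coordinate, you should replace your target by a configuration space in which points vanish near one wall in the $\I$-direction (in the style of $C((\bR,\bR\setminus\I)\times\I^{m-1})$ used in the paper's Theorem \ref{scanning}); that restriction map is continuous, and is the one to which the quasifibration and connectivity arguments apply.
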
 

\begin{theorem}[B\"odigheimer] \label{scanning}
	For $k>0$, $C( (\bR^k,\bR^k \setminus \I^k) \times \I^m)$ is weakly homotopy equivalent to $\Omega^m \Sigma^m S^{k}$.
\end{theorem}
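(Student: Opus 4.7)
The plan is to reduce the theorem to a cellular induction that matches the $\cE_k$-cell structure on $\bfA_r \simeq C^{[r]}(\I^k \times \I^m)$ with the classical cells of the May-Milgram filtration, and to promote the unfiltered B\"odigheimer/scanning comparison to a filtered one.

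By Corollary \ref{cor:a-desc} and Theorem \ref{thm:mainhighercell}, $C^{[r]}(\I^k \times \I^m)$ is equivalent as an $\cE_k$-algebra to the cellular algebra $\bfA_r$. Applying $B^k$ to the $\cE_k$-cell attachment in Theorem \ref{thm:mainhighercell} and using Remark \ref{deloopingCells} (delooping turns the generating $\cE_k$-cell attachment $\bfE_k(\partial D^d) \to \bfE_k(D^d)$ into $\partial D^d \hookrightarrow D^d$, and more generally a free attachment along $\bfE_k(X) \to \bfE_k(Y)$ into the corresponding attachment along $\Sigma^k X_+ \to \Sigma^k Y_+$), the map $B^k \bfA_{r-1} \to B^k \bfA_r$ sits in a homotopy pushout of spaces
\[\begin{tikzcd} \Sigma^k S(k\phi_{m,r})_+ \rar \dar & B^k \bfA_{r-1} \dar \\ \Sigma^k D(k\phi_{m,r})_+ \rar & B^k \bfA_r, \end{tikzcd}\]
whose cofiber is $\Sigma^k \mr{Th}(k\phi_{m,r})$. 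Classically, the May-Milgram successive cofiber is $F_r(\I^m)_+ \wedge_{\fS_r} (S^k)^{\wedge r}$, which is the Thom space of the $\fS_r$-bundle with fiber $(\bR^k)^r$ over $C_r(\I^m)$. Decomposing the permutation representation of $\fS_r$ on $\bR^r$ as trivial $\bR$ plus the standard $\bR^{r-1}$ and tensoring with $\bR^k$, this Thom space is also $\Sigma^k \mr{Th}(k\phi_{m,r})$.

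The next step is to construct a filtration-preserving comparison $\{B^k \bfA_r\}_r \to \{\cM_r C(\I^m; S^k)\}_r$ refining the colimit equivalence $B^k \bfE_{k+m}(\ast) \simeq C(\I^m; S^k)$. The cleanest route is via a scanning / B\"odigheimer model: the inclusion $C^{[r]}(\I^k \times \I^m) \hookrightarrow C(\I^k \times \I^m)$ composed with a scanning map in the $\I^k$-direction yields
\[ C^{[r]}(\I^k \times \I^m) \lra \Omega^k C^{[r]}((\bR^k, \bR^k \setminus \I^k) \times \I^m), \]
and one identifies the target with $\Omega^k \cM_r C(\I^m; S^k)$ via a filtered analogue of Theorem \ref{scanning}. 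The geometric content is that the $[r]$-constraint on $\I^m$-fibers is preserved under the scanning map (since translating configurations preserves per-fiber cardinality) and corresponds, after collapsing the $\I^k$-coordinate modulo the boundary to an $S^k$-label, to the condition of having at most $r$ non-basepoint labels in the May-Milgram model. Adjointly this yields the desired filtration-preserving map $B^k \bfA_r \to \cM_r C(\I^m; S^k)$.

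The theorem then follows by induction on $r$. The base case is $B^k \bfA_{-1} = \mr{pt} = \cM_{-1}$. For the inductive step, both $B^k \bfA_r$ and $\cM_r C(\I^m; S^k)$ are homotopy pushouts with matching attaching domain $\Sigma^k S(k\phi_{m,r})_+$ and codomain $\Sigma^k D(k\phi_{m,r})_+$, and bottom-left corners $B^k \bfA_{r-1} \simeq \cM_{r-1}$ by the inductive hypothesis; a gluing lemma then yields $B^k \bfA_r \simeq \cM_r$. The main obstacle is the filtered analogue of Theorem \ref{scanning} — i.e., showing that the cardinality filtration on $C((\bR^k, \bR^k \setminus \I^k) \times \I^m)$ corresponds under scanning to the May-Milgram filtration on $C(\I^m; S^k)$, and that the attaching maps $\Sigma^k S(k\phi_{m,r})_+ \to B^k \bfA_{r-1}$ and $\Sigma^k S(k\phi_{m,r})_+ \to \cM_{r-1}$ agree up to homotopy under the inductive equivalence. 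Both descriptions arise geometrically from the same operation of specializing the coordinates of one additional configuration point to the stratum where it collides with another or escapes through the $\I^k$-boundary, and the filtered B\"odigheimer comparison should follow by a stratification argument analogous to the one used in the unfiltered case.
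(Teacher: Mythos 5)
Your proposal does not prove the statement it is attached to, and as a proof of that statement it is circular. Theorem \ref{scanning} asserts the \emph{unfiltered} weak equivalence $C((\bR^k,\bR^k\setminus\I^k)\times\I^m)\simeq\Omega^m\Sigma^m S^k$; in the paper this is not proved but imported as a specialization of Proposition 2 of \cite{B}, exactly as Theorem \ref{mayscan} is imported from May. Your induction instead works toward the filtered identification $B^k\bfA_r\simeq\cM_r C(\I^m;S^k)$, which is the paper's Theorem \ref{mainfilt}, and nowhere concludes the asserted unfiltered equivalence; even passing to the colimit over $r$ would require May's approximation theorem plus a filtration-compatible comparison of the two configuration models, which is exactly the content of Lemma \ref{filtrations}. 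Worse, in the middle of the construction you ``identify the target with $\Omega^k\cM_r C(\I^m;S^k)$ via a filtered analogue of Theorem \ref{scanning}'': you are assuming a strengthening of the very statement to be proved. In the paper's logical order the implication runs the other way: Theorems \ref{mayscan} and \ref{scanning} are external inputs, combined with Lemma \ref{filtrations} and Proposition \ref{delooponce} to prove Theorem \ref{mainfilt}; the $\cE_k$-cell structure of Theorem \ref{thm:mainhighercell} by itself identifies no delooping with $\Omega^m\Sigma^m S^k$.

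If you want a proof rather than a citation, the argument has to be a scanning argument independent of the results of this paper: scan in the $\I^m$-directions to get a map from $C((\bR^k,\bR^k\setminus\I^k)\times\I^m)$ to an $m$-fold loop space on the corresponding relative configuration space of $\bR^{k+m}$ (which is weakly equivalent to $S^{k+m}$), and show it is a weak equivalence by the May--Segal--McDuff induction over handles; connectivity of the relative model avoids group-completion issues. Separately, note that the gluing step you defer --- that the two attaching maps out of $\Sigma^k S(k\phi_{m,r})_+$ agree up to homotopy under the inductive equivalence --- is the genuinely hard point of any cell-by-cell comparison (matching cofibers and matching previous stages do not determine the pushout), and it is precisely what the paper circumvents by the explicit zig-zag of Proposition \ref{delooponce} rather than by comparing attaching maps.
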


We will relate these two models of $\Omega^m \Sigma^m S^{k}$, and compare filtrations of these spaces. The topological space $C( (\bR^k,\bR^k \setminus \I^k) \times \I^m)$ is filtered by the $C^{[r]}( (\bR^k,\bR^k \setminus \I^k) \times \I^m)$. From now on, we view $S^k$ as $\bR^k/(\bR^k \setminus \I^k)$ with base point given by the image of $\bR^k \setminus \I^k$. Define a map $\rho$ by
\begin{align*}\rho \colon C(\I^m;S^k) &\lra C( (\bR^k,\bR^k \setminus \I^k) \times \I^m) \\
((m_1;x_1),\ldots, (m_r,x_r)) &\longmapsto (x_1 \times m_1,\ldots, x_r \times m_r),\end{align*}
where $m_i \in \I^m$ and $x_i \in \bR^k/(\bR^k \setminus \I^k) = S^k$. This inclusion has image consisting of those configurations with at most one point in each fiber of $  \bR^k \times \I^m \to \I^m$. We denote its restrictions by $\rho_r \colon \mathcal \cM_r (C(\I^m;S^k)) \to C^{[r]}( (\bR^k,\bR^k \setminus \I^k) \times \I^m)$.

\begin{lemma} \label{filtrations}
	The maps $\rho$ and $\rho_r$ are homotopy equivalences. 
\end{lemma}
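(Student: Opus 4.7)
The plan is to first exhibit $\rho_r$ as a homeomorphism onto its image in $C^{[r]}((\bR^k,\bR^k\setminus\I^k)\times\I^m)$, and then to show that the image is a strong deformation retract of the target; the non-filtered case follows by passing to the colimit in $r$.

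I would begin by verifying that $\rho$ is a continuous injection onto the subspace $Z\subset C((\bR^k,\bR^k\setminus\I^k)\times\I^m)$ of equivalence classes whose canonical representative in $\I^k\times\I^m$ has at most one point per fiber over $\I^m$. The inverse sends such a representative $\{(y_1,n_1),\ldots,(y_s,n_s)\}$ (with the $n_i$ distinct) to the labeled configuration $((n_1;[y_1]),\ldots,(n_s;[y_s]))\in C(\I^m;S^k)$ using $\I^k\hookrightarrow S^k$; this inverse is continuous (points leaking across $\partial\I^k$ correspond to labels tending to the basepoint of $S^k$), so $\rho$ is a homeomorphism onto $Z$. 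Since $\rho$ sends a configuration of $\le r$ non-basepoint-labeled points to one with $\le r$ total points, $\rho_r$ is a homeomorphism onto $Z_r:=Z\cap\{|\xi|\le r\}\subset C^{[r]}((\bR^k,\bR^k\setminus\I^k)\times\I^m)$.

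Next I would construct a strong deformation retraction of $C^{[r]}((\bR^k,\bR^k\setminus\I^k)\times\I^m)$ onto $Z_r$ by a two-phase flow. Phase 1 separates any coincident $\I^m$-coordinates using an $\fS$-equivariant repulsive flow in the $\I^m$-direction; coincident $\I^m$-points must have distinct $\bR^k$-coordinates (else they would coincide as points of $\bR^k\times\I^m$), and this asymmetry is used to define the repulsion. Because Phase 1 only alters $\I^m$-coordinates, it leaves $\bR^k$-fibers invariant and preserves the $C^{[r]}$ constraint throughout. Phase 2 reduces the total point count to $\le r$ by flowing excess points outward along $\bR^k$ until their $\bR^k$-coordinates leave $\I^k$, at which stage they vanish under the equivalence relation. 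The speed of this outward flow is modulated by an auxiliary proper function on $\bR^k$ (e.g.\ distance to a fixed interior basepoint of $\I^k$), so that outermost points exit $\I^k$ first, and the flow is arranged so that the $C^{[r]}$ condition is preserved at every time and the flow is the identity on configurations already in $Z_r$.

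The main technical obstacle is carrying out this combined deformation $\fS$-equivariantly and continuously while respecting both the quotient topology (points leaking through $\partial\I^k$) and the $C^{[r]}$ condition at every instant. In particular, the Phase~2 selection of which points to push out must depend only on the configuration, not on an ordering; using a proper potential on $\bR^k$ sidesteps this, since it treats all points symmetrically and simply drives those farther from the center out first. Verifying the resulting homotopy is a strong deformation retraction onto $Z_r$ is then routine but technical, and combined with Step~1 it proves that $\rho_r$ is a homotopy equivalence for each $r$; the statement for $\rho$ follows by passing to the colimit in $r$.
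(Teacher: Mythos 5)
The strong deformation retraction you are aiming for in Step~2 cannot exist, because your target $Z_r$ is not closed in $C^{[r]}((\bR^k,\bR^k\setminus\I^k)\times\I^m)$. For example, with $k=m=1$ and $r\ge 2$, the configurations $\xi_s=\{(0.3,0.5),(0.7,0.5+s)\}$ lie in $Z_r$ for $s>0$ but converge as $s\to 0$ to $\xi_0=\{(0.3,0.5),(0.7,0.5)\}$, which satisfies the $C^{[r]}$-condition but has coincident $\I^m$-coordinates, hence $\xi_0\notin Z_r$. Any retraction $H_1$ onto $Z_r$ would satisfy $H_1(\xi_s)=\xi_s$, and then by continuity $H_1(\xi_0)=\xi_0\notin Z_r$, a contradiction. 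The same example shows that your Phase~1 cannot be made continuous: the repulsive flow is required to fix $\xi_s$ for $s>0$, yet must separate $\xi_0$, which is a jump. (There is also no canonical way to convert the ``$\bR^k$-asymmetry'' into a direction of repulsion in $\I^m$, since $k$ and $m$ are unrelated, and the combinatorics worsens when three or more points share an $\I^m$-coordinate.)

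The paper avoids both difficulties by not asking for a retraction. It builds a single-phase homotopy $H(t,\xi)$ that rescales the $\bR^k$-coordinates of \emph{every} configuration, including those in $Z$, outward by an amount governed by a continuously chosen threshold $\eta(\xi)$, where $\eta$ is chosen so that for each $x\in\I^m$ at most one point of $\xi$ lies within distance $\eta(\xi)$ of $(0,x)\in\bR^k\times\I^m$. At time $1$ the rescaling expels all points with $\bR^k$-norm greater than $\eta(\xi)$ out of $\I^k$, and by the choice of $\eta$ the surviving points automatically have pairwise distinct $\I^m$-coordinates, so a single rescaling does the work of both your phases. Because $H$ moves points of $Z$ it is not a retraction, but it does carry $Z$ into $Z$, so restricting $H$ to $Z\cong C(\I^m;S^k)$ produces the homotopies exhibiting $\rho$ as a homotopy equivalence. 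The same homotopy preserves every filtration stage, since rescaling can only delete points, so all the $\rho_r$ are treated simultaneously with no colimit step. To repair your proposal: drop Phase~1, replace the strong-retraction goal with a homotopy-equivalence goal, and implement the outward flow with a threshold chosen so that the surviving points automatically have distinct $\I^m$-coordinates.
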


\begin{proof}The strategy is to scale the configurations so that in each fiber of $ \bR^k \times \I^m \to \I^m$ all but at most one point is pushed into $\bR^k \setminus \I^k$. To do so, we pick a continuous function $\eta \colon C((\bR^k,\bR^k \setminus\I^k) \times \I^m) \to (0,\infty)$ with the property that for all $\xi \in C( (\bR^k,\bR^k \setminus\I^k) \times \I^m)$ and $x \in \I^m$, there is at most one point in $\xi$ that is within distance $\eta(\xi)$ of $0 \times x \in \bR^k \times \I^m$. Let $\phi_t^R \colon \bR^k \to \bR^k$ be a continuous family of maps, depending on $t \in [0,1]$ and $R >0$, such that:
	\begin{itemize}
		\item $\phi^R_0 = \mr{id}$,
		\item $\phi^R_t|_{(\phi^R_t)^{-1}( \I^k)  }$ a homeomorphism onto its image,
		\item $\phi_t^R( \bR^k \setminus \I^k) \subset \bR^k \setminus \I^k$ and $\phi^R_1(y) \in  \bR^k \setminus \I^k$ if $||y||>R$. 
	\end{itemize}
	Then we define 
	\begin{align*} H \colon [0,1] \times C((\bR^k,\bR^k \setminus \I^k) \times \I^m) &\lra C( (\bR^k,\bR^k \setminus\I^k) \times \I^m) \\
	(t,\xi) &\longmapsto (\mr{id} \times \phi_t^{\eta(\xi)})_*(\xi),\end{align*}
	where the subscript $*$ means induced map on configuration spaces. For $t=1$, all but at most one point in each fiber are pushed into $\bR^k \setminus \I^k$ (where these points vanish). In particular, we can regard it as a continuous map 
	\[h \colon C( (\bR^k,\bR^k \setminus \I^k) \times \I^m) \lra C(\I^m;S^k).\]
	The homotopy $H$ then provides a homotopy from $\rho \circ h$ to the identity on $C( (\bR^k,\bR^k \setminus \I^k) $, and since it preserves the subspace $C(\I^m;S^k)$ also a homotopy from $h \circ \rho$ to the identity on $C(\I^m;S^k)$. Thus $\rho$ is a homotopy equivalence.
	
	Since $\rho$, $h$ and $H$ preserve the filtration, this also proves the $\rho_r$ are homotopy equivalences.
\end{proof}

Thus $C^{[r]}( (\bR^k,\bR^k \setminus \I^k) \times \I^m) $ is homotopy equivalent to the $r$th stage of the May-Milgram filtration. We claim that the $k$-fold delooping of $C^{[r]}(\I^k \times \I^m) $ is $C^{[r]}( (\bR^k,\bR^k \setminus \I^k) \times \I^m)$. 

The $k$-fold bar construction of an augmented $E_k$-algebra is defined in full generality in Section 13.1 of \cite{GKRW1}. We will specialize it to the $\cE_k$-algebra $C^{[r]}(\bI^k \times \bI^m)$ in $\cat{Top}$, with its canonical augmentation to $\ast$, and make a minor modification to the ``grids'' for the sake of computational convenience, replacing $[0,1]$ with $[1/4,3/4]$ in the following definition:

\begin{definition}We write $\cP_k(p_1,\ldots,p_k) \subset \prod_j \bR^{p_j+1}$ for the subspace of $k$-tuples \[\{1/4<t^j_0<\cdots<t^j_{p_j}<3/4\}_{1 \leq j \leq k}.\] We make $[p_1,\ldots,p_k] \mapsto \cP_k(p_1,\ldots,p_k)$ into a $k$-fold semi-simplicial space by defining the $i$th face map in the $j$th direction by forgetting $t^j_i$.\end{definition}

\begin{definition}$B^{E_k}_{\bullet,\cdots,\bullet}(C^{[r]}(\bI^k \times \bI^m))$ is the $k$-fold semi-simplicial space with $(p_1,\ldots,p_k)$-simplices given by the subspace of
	\[(\{t^j_i\},\xi) \in \cP_k(p_1,\ldots,p_k) \times C^{[r]}(\bI^k \times \bI^m)\]
such that $\xi$ is contained in $\prod_j [t^j_0,t^j_{p_j}] \times \bI^m $ and $\xi$ is disjoint from $[1/4,3/4]^{j-1} \times \{t^j_i\} \times [1/4,3/4]^{k-j} \times \bI^m$ for $1 \leq j \leq k$ and $0 \leq i \leq p_j$.

For $0<i<p_j$, the $i$th face map in the $j$th direction is given by the corresponding face map on $\cP_k$ and the identity on $C^{[r]}(\bI^k \times \bI^m)$. The $0$th face map in the $j$th direction is given by the corresponding face map on $\cP_k$ and by deleting all particles in $\xi$ which have $j$th coordinate $<t^j_1$. Similarly, the $p_j$th face map in the $j$th direction is given by the corresponding face map on $\cP_k$ and by deleting all particles in $\xi$ which have $j$th coordinate $>t^j_{p_j-1}$.\end{definition}

\begin{definition}The $k$-fold delooping of $C^{r}(\bI^k \times \bI^m)$ is the pointed topological space given by
\[B^kC^{[r]}(\bI^k \times \bI^m) \coloneqq ||B^{E_k}_{\bullet,\cdots,\bullet}(C^{[r]}(\bI^k \times \bI^m))||.\]
\end{definition}

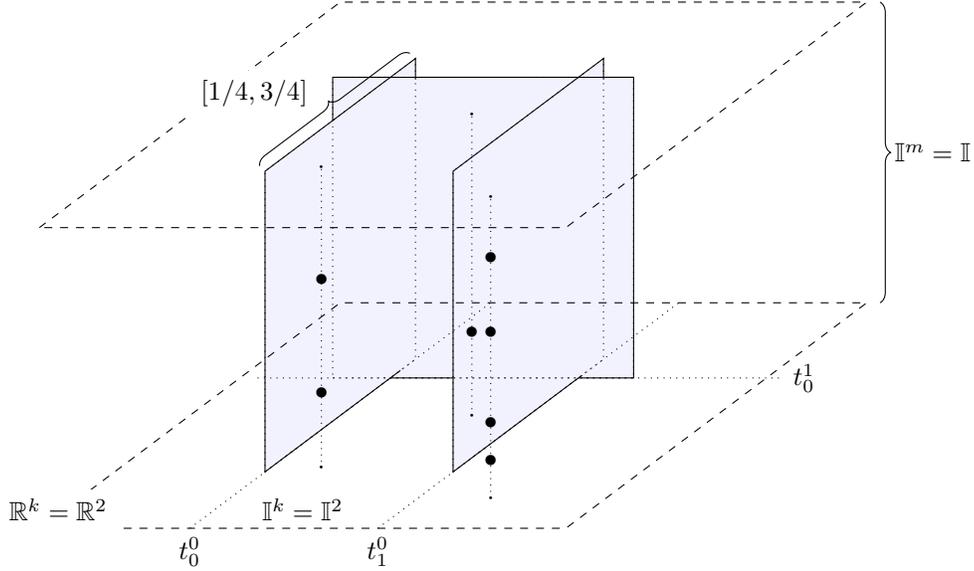
\begin{figure}
	\begin{tikzpicture}
		\draw [xshift=2.9 cm,yshift=2cm,fill=blue!5!white](0,0) -- (4,0) -- (4,4) -- (0,4) -- cycle;
		\draw [xshift=2 cm,yshift=0.75cm,fill=blue!5!white](0,0) -- (2,1.5) -- (2,5.5) -- (0,4) -- cycle;
		\draw [xshift=4.5 cm,yshift=0.75cm,fill=blue!5!white](0,0) -- (2,1.5) -- (2,5.5) -- (0,4) -- cycle;
		\fill [xshift=2.9 cm,yshift=2cm,fill=blue!5!white](3.3,0.03) -- (3.97,0.1) -- (3.97,3.97) -- (3.3,3.97) -- cycle;
		\fill [xshift=2.5 cm,yshift=2cm,fill=blue!5!white](1.3,0.03) -- (1.97,0.1) -- (1.97,3.97) -- (1.3,3.97) -- cycle;
		\draw [xshift=2 cm,yshift=0.75cm,dotted](0,0) -- (2,1.5) -- (2,5.5) -- (0,4) -- cycle;
		\draw [xshift=4.5 cm,yshift=0.75cm,dotted](0,0) -- (2,1.5) -- (2,5.5) -- (0,4) -- cycle;
		\draw [xshift=2.9 cm,yshift=2cm,dotted](0,0) -- (4,0) -- (4,4) -- (0,4) -- cycle;
		\draw [xshift=2.9 cm,yshift=2cm] (4,4) -- (0,4);
		\draw [dashed] (-1,0) -- (3,3) -- (10,3) -- (6,0) -- cycle;
		\node [fill=white] at (-.75,.25)  {$\bR^k = \bR^2$};
		\node at (2.5,.25) {$\I^k = \bI^2$};
		\draw [dashed,yshift=4cm] (-1,0) -- (3,3) -- (10,3) -- (6,0) -- cycle;
		\draw [decorate,decoration={brace,amplitude=4pt},xshift=5pt,yshift=0pt]
		(10,7) -- (10,3) node [black,midway,xshift=.7cm] {$\bI^m = \bI$};
		\draw [decorate,decoration={brace,amplitude=4pt},xshift=-2pt,yshift=2pt]
		(2,4.75) -- (4,6.25) node [black,midway,xshift=-.25cm,yshift=.22cm,left,fill=white] {$[1/4,3/4]$};
		\draw [xshift=2.9 cm,yshift=2cm,dotted] (-1,0) -- (6,0);
		\draw [dotted,xshift=1 cm] (0,0) -- (4,3);
		\draw [dotted,xshift=3.5 cm] (0,0) -- (4,3);
		\node at (8.9,2) [right] {$t^1_0$};
		\node at (1,0) [below] {$t^0_0$};
		\node at (3.5,0) [below] {$t^0_1$};
		\begin{scope}[xshift=2.75 cm,yshift=0.8cm]
		\draw [dotted] (0,0) -- (0,4);
		\node at (0,0) {$\cdot$};		
		\node at (0,4) {$\cdot$};
		\node at (0,1) {$\bullet$};
		\node at (0,2.5) {$\bullet$};
		\end{scope}
		\begin{scope}[xshift=4.75 cm,yshift=1.5cm]
		\node at (0,0) {$\cdot$};		
		\node at (0,4) {$\cdot$};
		\draw [dotted] (0,0) -- (0,4);
		\node at (0,1.1) {$\bullet$};
		\end{scope}
		\begin{scope}[xshift=5 cm,yshift=.4cm]
		\node at (0,0) {$\cdot$};		
		\node at (0,4) {$\cdot$};
		\draw [dotted] (0,0) -- (0,4);
		\node at (0,1) {$\bullet$};
		\node at (0,2.2) {$\bullet$};
		\node at (0,0.5) {$\bullet$};
		\node at (0,3.2) {$\bullet$};
		\end{scope}
	\end{tikzpicture}
	\caption{An element of $X_{1,0}$ for $r=4$, $k=2$, and $m=1$. Points in the configuration disappear when they leave the cube $\bI^2 \times \bI$, cannot hit the walls $\{t^0_i\} \times [1/4,3/4] \times \I$ for $i=0,1$ or $[1/4,3/4] \times \{t^1_0\} \times \I$, and every vertical line segment can contain at most $4$ points.}
\end{figure}

Combined with Lemma \ref{filtrations}, the following proposition completes the proof of Theorem \ref{mainfilt}.

\begin{proposition}	\label{delooponce}
	There is a zig-zag of weak equivalences of $\cE_m$-algebras
	\[B^kC^{[r]}(\bI^k \times \bI^m) \xleftarrow{||f_\bullet||} ||X_{\bullet,\ldots,\bullet}|| \overset{\epsilon}{\lra} C^{[r]}\left((\bR^k,\bR^k \setminus \bI^k) \times \I^m\right).\]
\end{proposition}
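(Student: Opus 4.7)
The plan is to interpolate between both configuration-space models via a common $k$-fold semi-simplicial space. Define $X_{p_1,\ldots,p_k}$ to be the subspace of pairs $(\{t^j_i\},\xi) \in \cP_k(p_1,\ldots,p_k) \times C^{[r]}((\bR^k,\bR^k \setminus \bI^k) \times \bI^m)$ for which $\xi$ is disjoint from each wall $[1/4,3/4]^{j-1} \times \{t^j_i\} \times [1/4,3/4]^{k-j} \times \bI^m$, with face maps acting as in $\cP_k$ on the grid coordinates and as the identity on $\xi$. Let $\epsilon$ be the forget-the-grid map to $C^{[r]}((\bR^k,\bR^k \setminus \bI^k) \times \bI^m)$, and let $f_\bullet$ be the levelwise map to $B^{E_k}_{\bullet,\ldots,\bullet}(C^{[r]}(\bI^k \times \bI^m))$ that restricts $\xi$ to the inner box $\prod_j [t^j_0,t^j_{p_j}] \times \bI^m$. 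A direct check shows $f_\bullet$ is a semi-simplicial map (for instance, for the $0$th face in the $j$th direction, restricting $\xi$ to the new inner box on the $X$-side matches the composition of restriction to the old inner box and then deletion of particles with $j$th coordinate below $t^j_1$), and both $\epsilon$ and $f_\bullet$ are manifestly $\cE_m$-equivariant for the $\cE_m$-structure coming from the $\bI^m$ factor.

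To prove $||f_\bullet||$ is a weak equivalence, I would exhibit a levelwise homotopy inverse $g_{p_1,\ldots,p_k}$ that sends $(\{t^j_i\},\xi') \in B^{E_k}_{p_1,\ldots,p_k}(C^{[r]}(\bI^k \times \bI^m))$ to the same pair viewed inside $X_{p_1,\ldots,p_k}$ (the wall-avoidance is inherited). Then $f \circ g = \mr{id}$, and $g \circ f \simeq \mr{id}$ via a continuous family $\phi_s \colon \bR^k \to \bR^k$, depending continuously on the grid, with $\phi_0 = \mr{id}$, $\phi_s$ the identity on $\prod_j [t^j_0,t^j_{p_j}]$ for every $s$, and $\phi_1$ sending the complement of the inner box into $\bR^k \setminus \bI^k$ (where points are collapsed to the basepoint in the quotient). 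Since this gives a levelwise homotopy equivalence of $k$-fold semi-simplicial spaces, the fat realization $||f_\bullet||$ is a weak equivalence.

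To prove $\epsilon$ is a weak equivalence, I would apply Lemma \ref{lem:microW} via a microfibration argument in the style of Lemmas \ref{ismicro} and \ref{iscontractible}. The fiber over $\xi$ is the realization of the $k$-fold semi-simplicial subspace of $\cP_k(\bullet,\ldots,\bullet)$ whose $t^j_i$ avoid the finite set $S^j_\xi$ of $j$th coordinates of points of $\xi$ lying in $[1/4,3/4]^k \times \bI^m$. Since the realization of a $k$-fold product semi-simplicial space is the product of its one-variable realizations, the fiber decomposes as a product over the $k$ coordinate directions; each factor is the realization of the semi-simplicial space of strictly increasing tuples in $(1/4,3/4) \setminus S^j_\xi$, which is contractible by an extra-degeneracy argument (after reducing to finite grids using Lemma \ref{lem:point-set}). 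The microfibration property follows as in Lemma \ref{ismicro}: for a small homotopy of $\xi$ through admissible configurations, compactness on $D^i \times [0,\epsilon]$ allows the grid to be kept fixed while the configuration is perturbed.

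The main obstacle is establishing contractibility of the fiber of $\epsilon$ in a form compatible with the microfibration argument, which requires the same type of point-set bookkeeping found in Lemmas \ref{ismicro}--\ref{iscontractible} and \ref{lem:point-set}. A secondary concern is constructing the family $\phi_s$ above continuously in the grid parameters $\{t^j_i\}$; this can be done explicitly by an independent rescaling in each of the $k$ coordinate directions, reducing the problem to the one-dimensional case of pushing the complement of $[t^j_0,t^j_{p_j}] \subset (0,1)$ into the complement of $(0,1)$ in $\bR$.
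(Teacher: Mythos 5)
Your setup for the zigzag matches the paper's, and the microfibration-plus-contractible-fibers strategy for $\epsilon$ is the same in outline. For the fibers, however, the paper argues levelwise: each space of $(p_1,\ldots,p_k)$-simplices of the fiber is homotopy equivalent to a product of discrete sets of order-preserving maps $\{0,\ldots,p_j\}\to\pi_0(S^j_\xi)$, i.e.\ a product of simplices, and levelwise weak equivalences pass to thick realizations. Your variant --- decompose as a product of one-variable realizations and apply an extra degeneracy --- needs care: $(1/4,3/4)\setminus S^j_\xi$ has no least element, so there is no canonical grid line to insert, and it is not clear your extra degeneracy can be made continuous. This part can probably be repaired, but the more substantial problem lies in the $||f_\bullet||$ half.

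The genuine gap: you claim $g\circ f\simeq\mathrm{id}$ on all of $X_{p_1,\ldots,p_k}$ via a continuous family $\phi_s\colon\bR^k\to\bR^k$ that fixes $\prod_j[t^j_0,t^j_{p_j}]$ for every $s$ and whose time-one map sends the complement of that box into $\bR^k\setminus\bI^k$. No such continuous $\phi_1$ exists: it fixes the boundary of the inner box, which lies strictly inside $\bI^k$ since each $t^j_i\in(1/4,3/4)$, so by continuity points just outside that boundary land near it and hence inside $\bI^k$, not in $\bR^k\setminus\bI^k$. The same connectedness obstruction kills the one-dimensional reduction you propose (one cannot continuously push $\bR\setminus[t^j_0,t^j_{p_j}]$ into $\bR\setminus(0,1)$ while fixing $[t^j_0,t^j_{p_j}]$). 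The paper avoids this with an intermediate step you are missing: it first shows that the inclusion $X'_{p_1,\ldots,p_k}\hookrightarrow X_{p_1,\ldots,p_k}$ is a homotopy equivalence, where $X'_{p_1,\ldots,p_k}$ consists of those $(\{t^j_i\},\xi)$ for which $\xi$ avoids the entire collar $[1/4,3/4]^k\setminus\prod_j(t^j_0,t^j_{p_j})$, and then proves $f\circ g=\mathrm{id}$ and $g\circ f\simeq\mathrm{id}$ on $X'$ rather than on $X$. On $X'$ every point of $\xi$ outside the inner box already has a first-$k$ coordinate outside $[1/4,3/4]$, so there is a positive gap between the inner box and the remaining points of the configuration, and it is precisely this gap that makes the ``push the remaining points outward until they exit $\bI^k$'' homotopy continuous on configuration spaces.
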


\begin{proof} We start by defining the augmented $k$-fold semi-simplicial topological space $X_{\bullet,\cdots,\bullet}$: its topological space of $(p_1,\ldots,p_k)$-simplices 
	\[X_{p_1,\ldots,p_k} \subset \cP_k(p_1,\ldots,p_k) \times C^{[r]}\left((\bR^k,\bR^k \setminus \bI^k) \times \I^m\right)\]
	is the subspace of $(\{t^j_i\},\xi)$ such that $\xi$ is disjoint from $[1/4,3/4]^{j-1} \times \{t^j_i\} \times [1/4,3/4]^{k-j} \times \bI^m$ for each $1 \leq j \leq k$ and $0 \leq i \leq p_j$. This is augmented over $C^{[r]}\left((\bR^k,\bR^k \setminus \bI^k) \times \I^m\right)$. The $i$th face map in the $j$th direction is given by forgetting $t^j_i$ and the augmentation forgets all $t^j_i$'s.
	
	We denote the map 
	\[||X_{\bullet,\cdots,\bullet}|| \lra C^{[r]}\left((\bR^k,\bR^k \setminus \bI^k) \times \I^m\right)\]
	by $\epsilon$. To show this is a weak equivalence, we prove it is a microfibration with weakly contractible fibers and invoke Lemma \ref{lem:microW}. For $\xi \in C^{[r]}\left((\bR^k,\bR^k \setminus \bI^k) \times \I^m\right)$, let $S^j_\xi \subseteq \bR$ be the subspace of $t \in (1/4,3/4)$ such that \[\xi \cap \left([1/4,3/4]^{j-1} \times  \{ t \}  \times [1/4,3/4]^{k-j} \times \I^m\right)  = \varnothing.\] The fiber $\epsilon^{-1}(\xi)$ is the thick geometric realization of a $k$-fold semi-simplicial space with space of $(p_1,\cdots,p_k)$-simplices homotopy equivalent to the product of sets of order preserving-maps from $\{0,\ldots,p_j\}$ to $\pi_0(S^j_\xi)$ for $1 \leq j \leq k$, which is product of simplices. Since levelwise weak equivalences induce weak equivalences on thick geometric realizations (see e.g.\ Theorem 2.2 of \cite{ebertrw}), the fibers of $\epsilon$ are weakly contractible. 
	
	The proof that $\epsilon$ is a microfibration is similar to that of Lemma \ref{ismicro}. The key fact is that if \[\xi \cap \left( [1/4,3/4]^{j-1} \times  \{ t^j\}  \times [1/4,3/4]^{k-j}  \times \I^m \right) =\varnothing , \]  the same will be true for nearby configurations (this is why we use $[1/4,3/4]$ instead of $\I$, otherwise new points could appear and hit the forbidden regions immediately).
	
	\vspace{.5em}
	
	We will next construct a $k$-fold semi-simplicial map 
	\[f_\bullet \colon X_{\bullet,\ldots,\bullet} \lra B^{E_k}_{\bullet,\cdots,\bullet}(C^{[r]}(\bI^k \times \bI^m))\] and prove its thick geometric realization is a weak equivalence. The map $f_{p_1,\ldots,p_k}$ is defined on a $(p_1,\ldots,p_k)$-simplex $(\{t^j_i\},\xi)$ by deleting from a configuration $\xi \in C^{[r]}\left((\bR^k,\bR^k \setminus \bI^k) \times \I^m\right)$ those points outside $\prod_{j=1}^k [t^j_0,t^j_{p_j}]$, and interpreting the remaining configuration as an element of $C^{[r]}\left(\bI^k \times \I^m\right)$. Since we are taking thick geometric realizations, to prove $||f_\bullet||$ is a weak equivalence, it suffices to prove each $f_{p_1,\cdots,p_k}$ is a weak homotopy equivalence.
	
	To prove this, we first observe that the inclusion of the subspace $X'_{p_1,\ldots,p_k}$ of $X_{p_1,\ldots,p_k}$ of those $(\{t^j_i\},\xi)$ such that for all $1 \leq j \leq k$ we have
	\[\xi \cap \left([1/4,3/4]^{j-1} \times ([1/4,t^j_0] \cup [t^j_{p_j},3/4]) \times [1/4,3/4]^{k-j} \times \bI^m \right) = \varnothing,\]
	is a homotopy equivalence. In other words, in $X'_{p_1,\cdots,p_k}$ all points in $\xi$ lie either in $\smash{\prod_{j=1}^k} [t^j_0,t^j_{p_j}]$ or have one of their first $k$ coordinates $<1/4$ or $>3/4$.
	
	Thus it suffices to prove that the inclusion
	\[g_{p_1,\cdots,p_k} \colon B^{E_k}_{p_1,\cdots,p_k}(C^{[r]}(\bI^k \times \bI^m)) \lra X'_{p_1,\ldots,p_k},\]
	which regards a configuration in $C^{[r]}(\bI^k \times \bI^m)$ as one in $C^{[r]}((\bR^k,\bR^k \setminus \bI^k) \times \bI^m)$, is a homotopy equivalence. Then the composition $f_{p_1,\ldots,p_k} \circ g_{p_1,\cdots,p_k}$ is the identity on $ B^{E_k}_{p_1,\cdots,p_k}(C^{[r]}(\bI^k \times \bI^m))$. A homotopy from $g_{p_1,\ldots,p_k} \circ f_{p_1,\cdots,p_k}$ to the identity on $X'_{p_1,\ldots,p_k}$ is given as follows: it is the identity on the points in $\xi$ in $\prod_{j=1}^k [t^j_0,t^j_{p_j}]$ and pushes the remaining points linearly outwards from $(1/2,\cdots,1/2)$ until all are in the regions $\bR^k \setminus \bI^k$ where they vanish.\end{proof}

\begin{remark}Snaith showed that the May-Milgram filtration stably splits \cite{Sn}. However, its lift to a filtration of $\bfE_{k+m}(\ast)$ of $\cE_k$-algebras \emph{does not} split after taking suspension spectra. Such a splitting would imply that $C_2(\I^2 \times \I^1) \simeq \bR P^2$ stably splits off a copy of $F_2^{[1]}(\I^2 \times \I^1)/\fS_2 \simeq \bR P^1$. 
	
However, this filtration \emph{does} split after stabilizing in a different manner. Recall $Q^{\cE_k}_\bL$ denotes the derived indecomposables functor of Remark \ref{rem:o-homology-ss}. Basterra-Mandell showed that derived indecomposables can be considered as stabilization of an algebra over an operad \cite{BaM05}, and derived indecomposables of an $\cE_k$-algebra may be computed using its $k$-fold bar construction, see \cite{BaM11} or Chapter 13 of \cite{GKRW1}. Thus the induced filtration of the stabilization $Q^{\cE_k}_{\bL}(\Sigma^\infty \bfE_{k+m}(\ast)_+)$ agrees with the suspension spectra of the May-Milgram filtration and hence splits by the work of Snaith \cite{Sn}.
\end{remark}

\bibliographystyle{amsalpha}
\bibliography{cellfree}

\vspace{1em}

\end{document}